\newcommand{\mc}[1]{\mathcal{#1}}
\DeclareMathOperator{\Cov}{Cov}
\DeclareMathOperator{\cov}{cov}
\DeclareMathOperator{\CAN}{CAN}
\DeclareMathOperator{\ACAN}{ACAN}
\newcommand{\covt}{\cov_t}
\newcommand{\covm}{\cov_{\max}}
\newcommand{\floor}[1]{\left \lfloor #1 \right \rfloor}
\newcommand{\ceil}[1]{\left \lceil #1 \right \rceil}
\newcommand{\card}[1]{\left| #1 \right|}
\renewcommand{\lg}{\log_2}
\newcommand{\eps}{\varepsilon}
\newtheorem{thm}{Theorem}[section]
\newtheorem{lemma}[thm]{Lemma}
\newtheorem{cor}[thm]{Corollary}
\newtheorem{prop}[thm]{Proposition}
\newtheorem{defi}[thm]{Definition}
\newtheorem{ques}[thm]{Question}
\title{A Semi-Random Construction of Small Covering Arrays}
\author{Shagnik Das\thanks{Freie Universit\"at Berlin, Institut f\"ur Mathematik, Arnimallee 3, 14195 Berlin, Germany.} \thanks{E-mail: \texttt{shagnik@mi.fu-berlin.de}} \and Tam\'as M\'esz\'aros\footnotemark[1] \thanks{E-mail: \texttt{tamas.meszaros@fu-berlin.de}. Position funded by the DRS POINT Fellowship Program.}}
\begin{document}

\maketitle

\begin{abstract}
Given a set $S$ of $v \ge 2$ symbols, and integers $k \ge t \ge 2$ and $N \ge 1$, an $N \times k$ array $A \in S^{N \times k}$ is an $(N; t, k, v)$-covering array if all sequences in $S^t$ appear as rows in every $N \times t$ subarray of $A$.  These arrays have a wide variety of applications, driving the search for small covering arrays.  The covering array number, $\CAN(t,k,v)$, is the smallest $N$ for which an $(N; t,k,v)$-covering array exists.

In this paper, we combine probabilistic and linear algebraic constructions to improve the upper bounds on $\CAN(t,k,v)$ by a factor of $\ln v$, showing that for prime powers $v$, $\CAN(t,k,v) \le (1 + o(1)) \left( (t-1) v^t / (2 \lg v - \lg (v+1)) \right)\lg k$, which also offers improvements for large $v$ that are not prime powers.  Our main tool, which may be of independent interest, is a construction of an array with $v^t$ rows that covers the maximum possible number of subsets of size $t$.
\end{abstract}

\section{Introduction} \label{sec:intro}

In the last few decades, a great deal of research has been devoted to the study of orthogonal and covering arrays, an important class of combinatorial designs.  This research is motivated by numerous applications, in particular to computer science and the design of experiments, and one of the major open problems in this area is to determine how small these arrays can be.  In this paper we improve the best-known general bounds for this problem.  We first provide a brief introduction to the subject, surveying some relevant results from the literature, before presenting our new results.

\subsection{Background and previous results} \label{subsec:background}

Let $A$ be an $N \times k$ array, whose entries come from some set $S$ of $v$ symbols; that is, $A \in S^{N \times k}$.  In the context of experimental design, $N$ represents the number of trials to be carried out, $k$ denotes the number of factors to be tested, and $S$ is the set of levels these factors can take.  The objective is to determine how subsets of the factors interact with one another.  To that end, given a set $Q$ of $t$ column indices, we denote by $A_Q$ the $N \times t$ subarray obtained by restricting $A$ to the columns in $Q$.

\begin{defi}[Orthogonal arrays]
Given a set $S$ of $v$ symbols, $k$ columns, an \emph{index} $\lambda \in \mathbb{N}$ and a \emph{strength} $t \in \mathbb{N}$, let $N = \lambda v^t$.  An $N \times k$ array $A \in S^{N \times k}$ is an \emph{$(N;t,k,v)$-orthogonal array} if for every subset $Q$ of $t$ columns, every sequence in $S^t$ appears exactly $\lambda$ times as a row of the subarray $A_Q$.
\end{defi}

An orthogonal array is therefore a very regular structure, behaving uniformly with respect to every subset of $t$ columns, giving rise to an important application in theoretical computer science.  Many randomised algorithms use some large number $k$ of independent random variables, each uniformly distributed over a set $S$ of size $v$, thus using the exponentially large probability space $S^k$.  However, quite often one only requires the weaker condition that the random variables be $t$-wise independent, for some small $t$.  Given an $(N; t, k, v)$-orthogonal array, a uniform distribution on the $N$ rows of this array provides a probability space with the desired independence, and, if $N$ is small, this allows for brute-force derandomisation of the algorithm.

The interest, then, is in determining how few rows an orthogonal array with a given strength and number of columns can have.  In one of the early papers on the subject, Plackett and Burman~\cite{PB46} provided sharp bounds for orthogonal arrays of strength two, showing how the number of rows must grow with the number of columns.

\begin{thm}[Plackett--Burman~\cite{PB46}, 1946] \label{thm:OAstrengthtwo}
If an $(N; 2, k, v)$-orthogonal array exists, we must have
\[ k \le \floor{ \frac{N - 1}{v - 1} }. \]
\end{thm}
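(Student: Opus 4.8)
The plan is to give a linear-algebraic argument, encoding the orthogonal array axioms as orthogonality relations among suitable indicator vectors in $\mathbb{R}^N$. Note first that the definition forces $k \ge t = 2$ and $N = \lambda v^2$ for the index $\lambda$, so everything below is well defined; assume without loss of generality that $S = \{0,1,\ldots,v-1\}$. For each column $j$ (with $1 \le j \le k$) and each symbol $s \in S$, let $\ve{x}_{j,s} \in \mathbb{R}^N$ be the indicator vector of the set of rows $r$ with $A_{r,j} = s$, and let $\ve{1}$ be the all-ones vector. Since an array of strength $2$ also has strength $1$ (restrict a column to a pair with any second column, which exists as $k \ge 2$), every symbol occurs exactly $\lambda v$ times in each column, so $\langle \ve{x}_{j,s}, \ve{1} \rangle = \lambda v$; the strength-$2$ property gives $\langle \ve{x}_{j,s}, \ve{x}_{j',s'} \rangle = \lambda$ for $j \ne j'$; and trivially $\langle \ve{x}_{j,s}, \ve{x}_{j,s'} \rangle$ equals $\lambda v$ if $s = s'$ and $0$ otherwise.

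Next I would pass to the centred vectors $\ve{y}_{j,s} = \ve{x}_{j,s} - \tfrac{1}{v}\ve{1}$. A short computation with the inner products above shows that every $\ve{y}_{j,s}$ is orthogonal to $\ve{1}$, that $\ve{y}_{j,s}$ and $\ve{y}_{j',s'}$ are orthogonal whenever $j \ne j'$, and that for a fixed column $j$ the Gram matrix of $(\ve{y}_{j,s})_{s \in S}$ equals $\lambda(v I_v - J_v)$, where $I_v$ is the identity and $J_v$ the all-ones $v \times v$ matrix. This last matrix has rank $v-1$ (its kernel is spanned by $\ve{1}$); more precisely, deleting any one row and the corresponding column yields $\lambda(v I_{v-1} - J_{v-1})$, whose eigenvalues are $\lambda$ and $\lambda v$, hence nonsingular. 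Since a set of vectors with nonsingular Gram matrix is linearly independent, for each $j$ the $v-1$ vectors $\{\ve{y}_{j,s} : s \in S \setminus \{0\}\}$ span a subspace $V_j \subseteq \mathbb{R}^N$ of dimension exactly $v-1$.

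Finally I would assemble the pieces: the subspaces $V_1,\ldots,V_k$ are pairwise orthogonal, and each is orthogonal to the line $\mathbb{R}\ve{1}$, so $\mathbb{R}^N$ contains the orthogonal direct sum $\mathbb{R}\ve{1} \oplus V_1 \oplus \cdots \oplus V_k$, which has dimension $1 + k(v-1)$. Therefore $1 + k(v-1) \le N$, i.e. $k \le (N-1)/(v-1)$, and since $k$ is an integer this gives $k \le \floor{(N-1)/(v-1)}$.

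There is no serious obstacle here; the one point that needs care is the rank analysis of the within-column Gram matrix — specifically verifying that \emph{any} $v-1$ of the $v$ centred vectors in a column stay linearly independent, so that each column genuinely contributes $v-1$ (and not fewer) dimensions — but this reduces to the one-line eigenvalue computation for $\lambda(v I_{v-1} - J_{v-1})$ noted above.
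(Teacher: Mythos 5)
Your proof is correct. The paper itself states this result as a citation to Plackett and Burman without supplying a proof, so there is no in-paper argument to compare against; your linear-algebraic argument is essentially the standard modern proof of the Rao bound specialised to strength $2$. All the computations check out: the centred vectors $\ve{y}_{j,s} = \ve{x}_{j,s} - \tfrac{1}{v}\ve{1}$ are orthogonal to $\ve{1}$ and to each other across columns, the within-column Gram matrix is $\lambda(vI_v - J_v)$, and the $(v-1)\times(v-1)$ principal submatrix $\lambda(vI_{v-1}-J_{v-1})$ is positive definite (eigenvalues $\lambda$ and $\lambda v$), so each column contributes exactly $v-1$ mutually orthogonal dimensions beyond $\mathbb{R}\ve{1}$, giving $1 + k(v-1) \le N$ and hence $k \le \lfloor (N-1)/(v-1)\rfloor$ by integrality. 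One small cosmetic point: you invoke $k \ge 2$ to derive the strength-$1$ consequence (each symbol appears $\lambda v$ times per column), which is fine, though strictly speaking $k \ge t = 2$ is already built into the definition; if $k = 1$ the statement is vacuous anyway since $1 \le (N-1)/(v-1)$ holds for $N = \lambda v^2$.
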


Following this initial focus on orthogonal arrays of strength two, Rao~\cite{Rao46, Rao47} generalised the notion to arrays of strength $t$, giving rise to the modern study of orthogonal arrays.  For an account of the last half-century's developments in the field, the reader is referred to the book of Hedayat, Sloane and Stufken~\cite{HSS99}.

However, the high level of regularity required of an orthogonal array places severe restrictions on the possible values of the parameters, and hence the fundamental question asks for which parameters an $(N; t, k, v)$-orthogonal array exists.  For many applications, one is willing to make do with a smaller, yet less regular, construction, giving rise to the relaxation of orthogonal arrays to covering arrays.  Here, one only requires that all sequences in $S^t$ appear at least once in every $N \times t$ subarray, dropping the condition that they appear equally often.  The primary question is now an extremal one --- how small can an array satisfying this weaker condition be?

\begin{defi}[Covering arrays]
Given a set $S$ of $v$ symbols, $N$ rows, $k$ columns and a \emph{strength} $t \in \mathbb{N}$, let $A \in S^{N \times k}$ be an $N \times k$ array.  We say that $A$ \emph{covers} a subset $Q$ of $t$ columns if every sequence in $S^t$ appears at least once as a row of the subarray $A_Q$.

The array $A$ is an \emph{$(N; t, k, v)$-covering array} if it covers every $t$-subset of the $k$ columns, and the \emph{covering array number} $\CAN(t,k,v)$ is the minimum number $N$ of rows for which an $(N; t, k, v)$-covering array exists.
\end{defi}

In the binary case $v = 2$, $S$ may be taken to be the set $\{0, 1\}$, and an $(N; t, k, 2)$-covering array can then be interpreted as the incidence matrix $A$ of a family $\mc F$ of $N$ subsets of the ground set $[k]$.  In this setting, covering is often referred to as \emph{shattering}, as a $t$-subset $Q \subseteq [k]$ is shattered whenever every one of its $2^t$ subsets appears as an intersection of $Q$ with some set $F$ in the family $\mc F$.  If the array is an $(N; t, k, 2)$-covering array, the corresponding family is said to be $(k,t)$-universal.

The study of such families dates back to the early 1970's, when one of the few exact results in this field was obtained by R\'enyi~\cite{Renyi} and Katona~\cite{Katona}, and independently by Kleitman and Spencer~\cite{KlSp}.  In our terminology, they showed
\[ \CAN(2, k, 2) = \min \left\{ N : k \le \binom{N - 1}{\floor{ \frac{N}{2}} - 1} \right\} = \lg k + \left( \frac12 + o(1) \right) \lg \lg k. \]
For larger $t$, Kleitman and Spencer~\cite{KlSp} showed there are constants $c_1, c_2 > 0$ such that
\[ c_1 2^t \lg k \le \CAN(t,k,2) \le c_2 t 2^t \lg k. \]

These results have since been extended to covering arrays over larger sets of symbols.  In the strength-two case, Gargano, K\"orner and Vaccaro~\cite{GKV93} established the asymptotic result
\[ \CAN(2,k,v) = \left( \frac12 + o(1) \right) v \lg k. \]

For higher strengths, there is a considerable gap between the best-known upper and lower bounds on $\CAN(t,k,v)$.  A general upper bound was given by Godbole, Skipper and Sunley~\cite{GSS96}, who studied when the uniformly random array $A \in S^{N \times k}$ is an $(N; t,k,v)$-covering array.

\begin{thm}[Godbole--Skipper--Sunley~\cite{GSS96}, 1996] \label{thm:classicbound}
For fixed integers $t, v \ge 2$, as $k \rightarrow \infty$,
\[ \CAN(t,k,v) \le \left(1 + o(1) \right) \frac{(t-1) \lg k}{\lg \frac{v^t}{v^t - 1}}. \]
\end{thm}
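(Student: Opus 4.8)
The plan is to apply the (symmetric) Lov\'asz Local Lemma to the uniformly random array. Fix $N$ (to be chosen at the end), and let $A \in S^{N \times k}$ have all $Nk$ of its entries chosen independently and uniformly at random from $S$. For each $t$-subset $Q \subseteq [k]$, let $\mc{B}_Q$ denote the bad event that $A$ fails to cover $Q$, i.e.\ that some word of $S^t$ does not appear as a row of the subarray $A_Q$. If we can show $\Pr\big[\bigcap_Q \overline{\mc{B}_Q}\big] > 0$, then some realisation of $A$ is an $(N; t, k, v)$-covering array, and hence $\CAN(t,k,v) \le N$.

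First I would bound the probability of a single bad event. Since the $N$ rows of $A_Q$ are i.i.d.\ uniform on $S^t$, a fixed word of $S^t$ occurs in none of them with probability $(1 - v^{-t})^N$, and a union bound over the $v^t$ words gives $\Pr[\mc{B}_Q] \le v^t (1 - v^{-t})^N =: p$ for every $Q$. Next, and this is the crucial point, I would analyse the dependency structure: the event $\mc{B}_Q$ is a function only of the entries of $A$ lying in the columns of $Q$, and entries in disjoint sets of columns are independent, so $\mc{B}_Q$ is mutually independent of $\{\mc{B}_{Q'} : Q' \cap Q = \emptyset\}$. The number of $t$-subsets $Q'$ (including $Q$ itself) that meet $Q$ is $\binom{k}{t} - \binom{k-t}{t}$, which for fixed $t$ equals $(1 + o(1)) \tfrac{t}{(t-1)!} k^{t-1}$ as $k \to \infty$; denote it by $D$, so each $\mc{B}_Q$ depends on at most $D - 1$ of the others. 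The point is that $D = \Theta(k^{t-1})$, not $\Theta(k^t)$: it is exactly this saving that turns the exponent $t$ coming from a crude union bound over all $(Q,s)$ into the exponent $t-1$.

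Now I would invoke the symmetric Local Lemma: if $e \, p \, D \le 1$, then $\Pr\big[\bigcap_Q \overline{\mc{B}_Q}\big] > 0$. Substituting $p = v^t(1 - v^{-t})^N$, the hypothesis $e v^t D (1 - v^{-t})^N \le 1$ rearranges (after taking logarithms) to
\[
N \;\ge\; \frac{\ln\!\big(e v^t D\big)}{\ln \frac{v^t}{v^t - 1}} \;=\; \frac{(t-1)\ln k + O_{t,v}(1)}{\ln \frac{v^t}{v^t - 1}},
\]
where the second equality uses $\ln D = (t-1)\ln k + O_{t,v}(1)$. Replacing $\ln$ by $\lg$ throughout does not change the quotient, and letting $k \to \infty$ with $t, v$ fixed absorbs the additive $O_{t,v}(1)$ into a $(1 + o(1))$ factor; thus $N = (1+o(1)) \tfrac{(t-1)\lg k}{\lg (v^t/(v^t-1))}$ suffices, which is the claimed bound.

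The step I expect to be the crux is the dependency count in the middle paragraph — observing that bad events attached to \emph{disjoint} column-subsets are genuinely independent, so that the Local Lemma "sees" only $\Theta(k^{t-1})$ neighbours per event rather than all $\Theta(k^t)$ of them. Everything else is routine: the single-event bound is an elementary union bound, and the final line is just bookkeeping of the lower-order contributions (the constants $e$ and $v^t$, the factor $\tfrac{t}{(t-1)!}$, the estimate of $\binom{k}{t} - \binom{k-t}{t}$, and the ceiling implicit in the definition of $\CAN$), all of which disappear into the $o(1)$.
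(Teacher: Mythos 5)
Your proof is correct. The paper states this theorem as a cited result of Godbole, Skipper and Sunley without reproducing an argument, but it does record that the original proof analysed the uniformly random array, exactly as you do; and your Lov\'asz Local Lemma argument is the very mechanism the paper itself deploys in Section~\ref{sec:covering} to prove Corollary~\ref{cor:CAN} --- there the dependency degree is bounded by $t\binom{k}{t-1}$, which agrees with your exact count $D = \binom{k}{t} - \binom{k-t}{t} = \Theta(k^{t-1})$, and the gain from $k^t$ (naive union bound) to $k^{t-1}$ is precisely what produces the factor $t-1$ in both places. Your single-event estimate $p = v^t(1-v^{-t})^N$, the observation that events attached to disjoint column sets are genuinely mutually independent (being functions of disjoint sets of independent entries), and the final absorption of the constants $e$, $v^t$, $\frac{t}{(t-1)!}$ and the integer rounding into the $(1+o(1))$ factor are all in order.
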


One can prove a lower bound by induction on $t$, reducing the problem to the $t=2$ case.  This shows $\CAN(t,k,v) \ge \left( \frac12 + o(1) \right) v^{t-1} \lg ( k - t + 2 )$.  In particular, when $t$ and $v$ are fixed, $\CAN(t,k,v) = \Theta_{t,v} \left( \lg k \right)$, but the correct dependence on $t$ and $v$ is unknown.  For convenience, we often consider the behaviour of the function in the limit as $k$ goes to infinity.

\begin{defi}
Given integers $t, v \ge 2$, we define
\[ d(t,v) = \limsup_{k \rightarrow \infty} \frac{\CAN(t,k,v)}{\lg k}. \]
\end{defi}

The result of Gargano, K\"orner and Vaccaro~\cite{GKV93} thus shows that $d(2,v) = \frac{v}{2}$, while Theorem~\ref{thm:classicbound} can be restated as saying $d(t,v) \le \frac{t-1}{\lg \frac{v^t}{v^t - 1}}$.  Note that in the limit as $v^t \rightarrow \infty$, this bound is asymptotically $(t-1)v^t \ln 2$.  

There have since been some improvements to the lower-order terms of this upper bound.  Franceti\'c and Stevens~\cite{FS16} showed that for all $t, v \ge 2$,
\[ d(t,v) \le \frac{v(t-1)}{\lg \frac{v^{t-1}}{v^{t-1} - 1}}. \]
Sarkar and Colbourn~\cite{SC16} gave an alternative proof of this improved bound, and further reduced the bound whenever $v$ is a prime power.  In this case, they proved
\[ d(t,v) \le \frac{v(v-1)(t-1)}{\log \frac{v^{t-1}}{v^{t-1} - v + 1}}. \]
While these bounds give the same asymptotics as Theorem~\ref{thm:classicbound} as $v^t$ tends to infinity, they are significantly better for small values of $v$ and $t$.  For further details of the best-known constructions for particular values of $v$ or $t$, we refer the reader to the excellent surveys of Lawrence, Kacker, Lei, Kuhn and Forbes~\cite{LKLKF11} in the binary setting and of Colbourn~\cite{Col04} in the general setting. 

\subsection{Our results} \label{subsec:results}

We begin by studying what is, in some sense, an inverse problem.  Rather than seeking the smallest array that covers all $t$-sets of columns, we fix the size of the array and try to maximise the number of covered $t$-sets.  This gives rise to the following extremal function.

\begin{defi}[Maximum coverage function]
Suppose we have a set $S$ of $v$ symbols, $N$ rows, $k$ columns and a strength $t \in \mathbb{N}$.  For an $N \times k$ array $A \in S^{N \times k}$, let $\Cov(A)$ denote the collection of all subsets of columns that are covered by $A$, and define $\covt(A)$ to be the number of sets of size $t$ in $\Cov(A)$.

We define the \emph{maximum coverage function, $\covm(N; t, k, v)$,} to be the maximum number of $t$-subsets that can be covered by such an array.  That is,
\[ \covm(N; t, k, v) = \max \left\{ \covt(A) : A \in S^{N \times k} \right\}. \]
\end{defi}

We note that similar notions have appeared previously in the literature.  Hartman and Raskin~\cite{HR04} and Maximoff, Trela, Kuhn and Kacker~\cite{MTKK10} suggested comparable lines of study, with a focus on developing heuristics for building small arrays that cover many sets.  On the other hand, our interest is in proving general bounds on the function $\covm(N; t, k, v)$.  More recently, Sarkar, Colbourn, De Bonis and Vaccaro~\cite{SCDV16} proved bounds on the sizes of almost-covering arrays, which are arrays that cover almost all $t$-sets.  This is more closely related to our investigation, and we shall discuss their results in our concluding remarks.

Note that in order for an array $A \in S^{N \times k}$ to cover even a single $t$-set $Q$ of columns, we must have at least $v^t$ rows, as each of the $v^t$ sequences in $S^t$ must appear as rows in $A_Q$.  This trivially gives $\covm(N; t, k, v) = 0$ for all $N \le v^t - 1$.  The first problem of interest is thus to determine $\covm(v^t; t, k, v)$, and this is the case on which we focus.

Given that the upper bound of Theorem~\ref{thm:classicbound} comes from the random array, this is a natural candidate to consider for our problem as well.  Let $A_{\mathrm{rand},v^t}$ be a uniformly random array chosen from $S^{v^t \times k}$.  Observe that for any subset $Q$ of $t$ columns, the $v^t$ rows of $(A_{\mathrm{rand},v^t})_Q$ are independent and uniformly distributed over $S^t$.  The probability that these rows are all distinct, and hence that $Q$ is covered, is thus
\[ \mathbb{P} \left( Q \in \Cov(A_{\mathrm{rand},v^t}) \right) = \frac{(v^t)!}{(v^t)^{v^t}}. \]
As $v^t$ tends to infinity, this is $e^{-(1 - o(1))v^t}$, and so in expectation $A_{\mathrm{rand},v^t}$ only covers an exponentially small fraction of all $t$-sets of columns.

A moment's thought reveals that this is far from optimal.  Indeed, consider the following block construction, where for simplicity we suppose that $k$ is divisible by $t$.  Partition the $k$ columns into $t$ equal-sized subsets $B_1, B_2, \hdots, B_t$, and build an array $A_{\mathrm{block}}$ whose rows are all sequences in $S^k$ that are constant on the sets $B_i$, $1 \le i \le t$.  We clearly have exactly $v^t$ rows, and a set $Q$ of columns is covered if and only if $\card{Q \cap B_i} \le 1$ for all $i$.  Therefore, when $k$ and $t$ are suitably large, we have
\[ \covt(A_{\mathrm{block}}) = \left( \frac{k}{t} \right)^t \sim \sqrt{2 \pi t} \cdot e^{-t} \binom{k}{t}. \]
This already gives a significantly better lower bound on $\covm(v^t; t, k, v)$ than the random array $A_{\mathrm{rand},v^t}$, but, as it turns out, is still far from the truth.  In our primary result below we give an upper bound on $\covm(v^t; t, k, v)$ and, whenever $v$ is a prime power, demonstrate its tightness by means of an explicit construction.

\begin{thm} \label{main}
Given integers $t, v \ge 2$, define
\[ c_{t,v} = \prod_{i=0}^{t-1} \frac{v^t - v^i}{v^t - 1}. \]
Then, for any $k \ge t$, we have the bound
\[ \covm(v^t; t, k, v) \le c_{t,v} \frac{k^t}{t!}. \]
If $v$ is a prime power, and $\frac{v^t-1}{v-1}$ divides $k$, we have equality; that is, $\covm(v^t; t, k, v) = c_{t,v} \frac{k^t}{t!}$.  For prime power $v$ and all $k \ge t$, we have
\[ \covm(v^t; t, k, v) \ge c_{t,v} \binom{k}{t}. \]
\end{thm}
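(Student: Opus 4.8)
The plan is to reduce all three assertions to statements about the partitions $P_c$ that the columns induce on the $v^t$ rows. Observe that $A$ covers a $t$-set $Q$ if and only if no two rows agree on all of $Q$, i.e.\ $\bigwedge_{c\in Q}P_c$ is the all-singleton partition; moreover, if this holds then each $P_c$ with $c\in Q$ must have exactly $v$ classes of size $v^{t-1}$ (a class of size exceeding $v^{t-1}$ could never be split into singletons by $t-1$ further partitions into at most $v$ classes each). Calling such a column \emph{balanced}, a non-balanced column lies in no covered $t$-set, so deleting all non-balanced columns leaves $\covt(A)$ unchanged and only decreases $k$; since $c_{t,v}k^t/t!$ is increasing in $k$, for the upper bound we may assume every column is balanced. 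Equivalently, letting $X$ be a uniformly random row and $Z_c$ the $P_c$-class of $X$, the columns become uniform random variables on $v$ symbols over a probability space of size $v^t$, and $Q$ is covered precisely when $(Z_c)_{c\in Q}$ are mutually independent; so we must bound the number of mutually independent $t$-subsets of such a family (this part works for all $v$).

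For the upper bound I would pick an ordered $t$-tuple $(c_1,\dots,c_t)$ of columns uniformly and independently and bound $\mathbb{P}\big[\{c_1,\dots,c_t\}\text{ is covered}\big]$ by $c_{t,v}$; since every covered ordered tuple has distinct entries, dividing by $t!$ then gives $\covm(v^t;t,k,v)\le c_{t,v}k^t/t!$. Revealing the columns one at a time, a covered tuple must refine the row set into exactly $v^i$ equal classes after $i$ steps, and $\tfrac{v^t-v^i}{v^t-1}$ is exactly the probability that a uniformly random point of the projective space of dimension $t-1$ over $\mathbb{F}_v$ misses a fixed $i$-dimensional subspace --- the ``ideal'' density of good columns at step $i$, realised by the linear construction below. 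The difficulty, which I expect to be the main obstacle, is that for a given prefix this density can be exceeded (one can rig an array so that almost every column refines a fixed partition), so a naive step-by-step argument yields only the trivial bound $k^t/t!$; one must instead argue globally, exploiting that the ambient space has exactly $v^t$ points to force the linear construction to be extremal. Concretely I would pass to the Fourier side --- with symbol set $\mathbb{Z}_v$ and $\omega=e^{2\pi i/v}$, associate to a balanced column $c$ and nonzero $j$ the vector $\chi_c^{(j)}=(\omega^{jZ_c(x)})_x\in\mathbb{C}^{v^t}$, so that $Q$ is covered precisely when the $v^t$ products $\prod_{c\in Q}\chi_c^{(j_c)}$ form an orthogonal basis of $\mathbb{C}^{v^t}$ --- and bound the number of such $t$-sets by a Cauchy--Schwarz / second-moment estimate on the row-pair agreement counts $|D(x,y)|=\big|\{c:c(x)=c(y)\}\big|$, arranging matters so that every inequality used is tight for the linear construction and the constant that falls out is exactly $c_{t,v}$.

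For the equality statement, let $v=q$ be a prime power, let $K=\tfrac{q^t-1}{q-1}$ be the number of lines through the origin in $\mathbb{F}_q^t$, and let $k=mK$. Identify the $q^t$ rows with $\mathbb{F}_q^t$; for each line $L$ fix a representative vector $a_L$, and build the array containing $m$ identical copies of the column $x\mapsto\langle a_L,x\rangle$ for each $L$. A $t$-set of columns is covered exactly when the representatives of the corresponding lines are linearly independent, since then $x\mapsto(\langle a_{L_i},x\rangle)_i$ is a linear bijection $\mathbb{F}_q^t\to\mathbb{F}_q^t$. The number of ordered $t$-tuples of columns of this kind is $m^t\cdot\prod_{i=0}^{t-1}(q^t-q^i)/(q-1)^t$ (choose the $t$ lines with independent representatives, then the copy indices freely), so $\covt(A)=\tfrac{m^t}{t!\,(q-1)^t}\prod_{i=0}^{t-1}(q^t-q^i)$, which a short computation identifies with $c_{t,v}k^t/t!$; combined with the upper bound this gives equality.

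Finally, for the lower bound at arbitrary $k\ge t$ (with $v=q$ a prime power), take the construction above with $m=k$, an array on $kK$ columns covering exactly $c_{t,v}(kK)^t/t!$ of the $t$-sets, and choose a uniformly random $k$-subset $C$ of its columns. The expected number of covered $t$-subsets contained in $C$ equals
\[ c_{t,v}\,\frac{(kK)^t}{t!}\cdot\frac{\binom{kK-t}{k-t}}{\binom{kK}{k}}\;=\;c_{t,v}\,\frac{(kK)^t}{t!}\cdot\frac{k(k-1)\cdots(k-t+1)}{kK(kK-1)\cdots(kK-t+1)}\;\ge\;c_{t,v}\binom{k}{t}, \]
since $(kK)^t\ge kK(kK-1)\cdots(kK-t+1)$. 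Hence some $k$-subset of the columns gives an array on $k$ columns covering at least $c_{t,v}\binom kt$ of the $t$-sets, which is the assertion.
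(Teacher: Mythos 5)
Your reductions (to balanced columns and to counting mutually independent $t$-subsets) are correct, and your equality argument for $k$ a multiple of $\frac{v^t-1}{v-1}$ matches the paper's linear-algebraic blow-up almost verbatim. Your lower bound for general $k$ is also correct and is a pleasant variant of the paper's argument: the paper takes a random partition of $[k]$ into $\frac{v^t-1}{v-1}$ blocks and computes $\mathbb{P}(Q\in\Cov(A))=c_{t,v}$ directly, while you take a random $k$-subset of the columns of a fixed large blow-up and apply linearity of expectation together with $(kK)^t\ge (kK)_t$; both are one-line averaging arguments and both work.

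The upper bound, however, is not proved. You correctly identify the central obstacle — that a column-by-column ``revealing'' argument cannot control, for a fixed prefix of $i$ good columns, the fraction of columns consistent with it — and then propose to ``pass to the Fourier side'' and bound the number of covered sets by some second-moment estimate on the pair-agreement counts $|D(x,y)|$, ``arranging matters so that every inequality used is tight.'' That is a plan, not an argument: no inequality is actually written down, no reason is given why a Cauchy--Schwarz on pair agreements should single out the constant $c_{t,v}$, and the $\mathbb{Z}_v$-character bookkeeping is itself slightly off (you define $\chi_c^{(j)}$ only for nonzero $j$ yet then speak of $v^t$ products). The paper's proof replaces this entirely: it writes $\covt(A)=k^t\,w(x,\mc H_{t,v})\le k^t\,\lambda(\mc H_{t,v})$ for the auxiliary hypergraph $\mc H_{t,v}$ on $[v]^{v^t}$ whose edges are the covering $t$-tuples, and bounds the Lagrangian by induction on $t$. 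The two ingredients you are missing are (i) that the support of a minimal optimal weighting is pairwise co-covered, hence forms the columns of a strength-two orthogonal array, so by Plackett--Burman its size is at most $\frac{v^t-1}{v-1}$ (Lemmas~\ref{F-R} and~\ref{nonzero weights}); and (ii) that the link of any vertex of $\mc H_{t,v}$ is $\mc H_{t-1,v}^{\otimes v}$ with the same Lagrangian as $\mc H_{t-1,v}$ (Lemmas~\ref{linkprod} and~\ref{Lagrange tensor}), which together with the elementary optimisation in Lemma~\ref{maxima} yields $\lambda(\mc H_{t,v})\le\frac{1}{t}\bigl(1-\frac{v-1}{v^t-1}\bigr)^{t-1}\lambda(\mc H_{t-1,v})$ and hence the product formula for $c_{t,v}$. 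Until you supply something of this strength — or actually carry out and verify the Fourier estimate — the first inequality of the theorem remains open in your write-up.
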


Observe that
\[ c_{t,v} = \prod_{i=0}^{t-1} \frac{v^t - v^i}{v^t - 1} \ge \prod_{i=1}^{t-1} \left(1 - v^{-i} \right) \ge \left( 1 - v^{-1} \right) \left( 1 - \sum_{i=2}^{\infty} v^{-i} \right) = 1 - \frac{v+1}{v^2}. \]
This shows, perhaps surprisingly, that as soon as we have a large enough array to cover a single $t$-set of columns, one can already cover a large proportion of all such $t$-sets.  We can then use these very efficient arrays to build small covering arrays.

\begin{cor} \label{cor:CAN}
For integers $k \ge t \ge 2$ and a prime power $v \ge 2$, we have
\[ \CAN(t, k, v) \le \ceil{\frac{(t-1)\lg k + \lg (et)}{\lg \frac{1}{1 - c_{t,v}}}} v^t. \]
In particular, this implies that whenever $v$ is a prime power,
\[ d(t,v) \le \frac{(t-1)v^t}{\lg \frac{1}{1 - c_{t,v}}} \le \frac{(t-1)v^t}{2 \lg v - \lg (v+1)}. \]
Moreover, there is some absolute constant $v_0 \in \mathbb{N}$ such that for all integers $v \ge v_0$, we have
\[ d(t,v) \le \frac{(t-1) e^{t v^{-0.474}} v^t}{2 \lg v - \lg (v+1)}. \]
\end{cor}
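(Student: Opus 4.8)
The plan is to build a covering array by stacking together several copies of the highly efficient arrays guaranteed by Theorem~\ref{main}, each copy arising from an independent random permutation of the columns, and to bound the number of copies needed by a union bound over the uncovered $t$-sets. Concretely, by Theorem~\ref{main}, since $v$ is a prime power there is a $v^t \times k$ array $A_0$ with $\covt(A_0) \ge c_{t,v}\binom{k}{t}$; equivalently, the fraction of $t$-sets of columns that $A_0$ \emph{fails} to cover is at most $1 - c_{t,v}$. For a permutation $\sigma$ of $[k]$, let $A_0^\sigma$ denote $A_0$ with its columns permuted by $\sigma$; this is again a $v^t \times k$ array covering the same number of $t$-sets. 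Stack $m$ such arrays $A_0^{\sigma_1}, \dots, A_0^{\sigma_m}$ to obtain an $m v^t \times k$ array $A$. A $t$-set $Q$ is covered by $A$ as soon as it is covered by one of the blocks, so $Q$ is uncovered by $A$ only if it is uncovered by every block.

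First I would observe that for a uniformly random permutation $\sigma$ and any fixed $t$-set $Q$, the set $\sigma(Q)$ is a uniformly random $t$-set, so $\mathbb{P}(Q \text{ uncovered by } A_0^\sigma) = \covt(A_0)^c / \binom{k}{t} \le 1 - c_{t,v}$, where the superscript denotes the complement count. Taking $\sigma_1, \dots, \sigma_m$ independent, $\mathbb{P}(Q \text{ uncovered by } A) \le (1 - c_{t,v})^m$. A union bound over all $\binom{k}{t} \le k^t/t! \le e (k/t)^t \cdot \tfrac{t}{e}\cdot\tfrac{1}{?}$ — more cleanly, over all $\binom{k}{t} < k^t / t!$ but I will instead use $\binom{k}{t} \le (ek/t)^t$ is too lossy; the right estimate to feed the stated bound is simply $\binom{k}{t} \le k^{t-1}\cdot k / t! $, and noting $t!\ge (t/e)^t$ gives $\binom{k}{t}\le (ek/t)^t$; to land exactly on $(t-1)\lg k + \lg(et)$ I would instead bound the number of \emph{relevant} $t$-sets more carefully, using that it suffices to cover all $t$-sets, of which there are at most $k^{t-1}\cdot k/t!$, and absorbing constants. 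In any case, there exists a choice of $\sigma_1, \dots, \sigma_m$ covering all $t$-sets provided $\binom{k}{t}(1-c_{t,v})^m < 1$, i.e. provided $m \ge \log\binom{k}{t} / \log\frac{1}{1-c_{t,v}}$, and the stated ceiling $\ceil{((t-1)\lg k + \lg(et))/\lg\frac{1}{1-c_{t,v}}}$ suffices once one checks $\binom{k}{t} \le (et)^{-1}\cdot ?$ — precisely, $\binom{k}{t}\le k^t/t!$ and $t! > (et)^{1}\cdot$(something) is false for small $t$, so I would instead verify directly that $\ln\binom{k}{t} \le (t-1)\ln k + \ln(et)$ fails and the cleaner route is $\binom kt\le \frac{k^{t}}{t!}$ together with Stirling $t!\ge e(t/e)^t$, giving $\ln\binom kt \le t\ln(k/t)+t-1<(t-1)\ln k+\ln(ek/t)\cdot$; the honest claim is that $\binom{k}{t}\le k^{t-1}\cdot et$ whenever $k\le t\cdot$ fails too, so the actual bound used in the corollary presumably comes from $\binom kt < \frac{k^t}{t!}$ and the extra $\lg(et)$ slack. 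The first inequality for $\CAN$ then follows: $\CAN(t,k,v) \le m v^t$ with this value of $m$.

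Next I would deduce the bounds on $d(t,v)$. Dividing $\CAN(t,k,v) \le m v^t$ by $\lg k$ and letting $k \to \infty$, the ceiling and the additive $\lg(et)$ term become negligible, and $m/\lg k \to (t-1)/\lg\frac{1}{1-c_{t,v}}$, yielding $d(t,v) \le (t-1)v^t / \lg\frac{1}{1-c_{t,v}}$. For the second inequality I use the estimate $c_{t,v} \ge 1 - \tfrac{v+1}{v^2}$ already established in the text after Theorem~\ref{main}, so $1 - c_{t,v} \le \tfrac{v+1}{v^2}$ and hence $\lg\frac{1}{1-c_{t,v}} \ge \lg\frac{v^2}{v+1} = 2\lg v - \lg(v+1)$, which gives the claimed simplified bound. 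For the final statement with $v \ge v_0$, I would need a sharper lower bound on $c_{t,v}$, namely one of the form $c_{t,v} \ge (1 - v^{-i})$-product estimates refined to $c_{t,v}\ge e^{-tv^{-0.474}}(1-o(1))$-type control; concretely, from $c_{t,v} = \prod_{i=0}^{t-1}\frac{v^t-v^i}{v^t-1} \ge \prod_{i=1}^{t-1}(1-v^{-i})$ and $\ln\prod_{i\ge1}(1-v^{-i}) \ge -\sum_{i\ge 1} \frac{v^{-i}}{1-v^{-i}} \ge -\frac{1}{v-1}\cdot\frac{1}{1-1/v}$, one gets $1-c_{t,v} \le 1 - e^{-O(1/v)}$, which is $O(1/v)$ — not obviously enough to produce the exponent $0.474$. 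So the genuinely delicate point, and the step I expect to be the main obstacle, is establishing the refined inequality $\lg\frac{1}{1-c_{t,v}} \ge (2\lg v - \lg(v+1)) e^{-tv^{-0.474}}$ for all large $v$: this requires tracking the lower-order terms in $\log(1-c_{t,v})$ and comparing $1 - c_{t,v}$ against $\frac{v+1}{v^2}$ with an error factor that decays like $e^{-tv^{-0.474}}$, presumably by writing $1-c_{t,v} = \frac{v+1}{v^2}\big(1 + (\text{correction})\big)$ where the correction, coming from the $i\ge 2$ factors and from the $\frac{1}{v^t-1}$ versus $1$ discrepancy, is bounded using the crude numerical inequality that $0.474$ is chosen to make work (likely $\log_2(\text{something}) \approx 0.474$, e.g. related to $2 - \log_2 3 \approx 0.415$ or a similar constant). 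The remaining arithmetic — verifying the $\binom kt$ bound, checking that the ceiling doesn't affect the limit, and simplifying $\lg\frac{v^2}{v+1}$ — is routine once this estimate is in hand.
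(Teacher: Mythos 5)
Your overall strategy---stacking random copies of the efficient $v^t\times k$ array from Theorem~\ref{main}---matches the paper's, and your derivation of the two middle bounds on $d(t,v)$ from the first bound is correct. However, there are two genuine gaps.

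First, the union bound is quantitatively insufficient. With a union bound you need $\binom{k}{t}(1-c_{t,v})^m < 1$, hence $m > \lg\binom{k}{t}/\lg\frac{1}{1-c_{t,v}}$. Since $\lg\binom{k}{t} = t\lg k - \lg t! + o(1)$, dividing by $\lg k$ and letting $k\to\infty$ gives $d(t,v) \le \frac{t\,v^t}{\lg\frac{1}{1-c_{t,v}}}$, with a factor $t$ rather than $t-1$. You noticed the arithmetic was not landing on $(t-1)\lg k + \lg(et)$; the reason is that no choice of $m$ of the stated size makes $\binom{k}{t}(1-c_{t,v})^m<1$ for large $k$. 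The paper instead applies the Lov\'asz Local Lemma: the event that $Q$ is uncovered is mutually independent of all events $\mathcal E_{Q'}$ with $Q'\cap Q=\emptyset$, so the relevant dependence degree is $d < t\binom{k}{t-1} \le tk^{t-1}$, and the LLL condition $ep(d+1)\le 1$ becomes $et\,k^{t-1}(1-c_{t,v})^r\le 1$, yielding exactly the stated $r$. For the LLL to apply, one needs the columns of the random array to be \emph{independent}: the paper takes, for each column $j$ and each copy $\ell$, an independent uniformly random index $i_{\ell,j}\in\bigl[\frac{v^t-1}{v-1}\bigr]$ and sets the $j$th column of $A^{(\ell)}$ to be the $i_{\ell,j}$th column of $A_{\mathrm{opt}}$. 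Your random permutations do not give this column-independence, so even if you switched to the LLL, that part of the construction would need to change.

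Second, the final bound with $e^{tv^{-0.474}}$ is not a refinement of the estimate on $c_{t,v}$ and does not require any delicate asymptotics of $\prod(1-v^{-i})$. It concerns the case where $v$ is \emph{not} a prime power: one uses the monotonicity $d(t,v)\le d(t,q)$ for any $q\ge v$, picks $q$ to be the smallest prime power $\ge v$, and invokes the Baker--Harman--Pintz prime-gap theorem to get $q\le v+v^{0.526}$ for $v$ large. Then $q^t\le(1+v^{-0.474})^t v^t\le e^{tv^{-0.474}}v^t$, and the denominator $2\lg q - \lg(q+1)\ge 2\lg v-\lg(v+1)$ by monotonicity. So the constant $0.474 = 1 - 0.526$ comes from analytic number theory, not from tracking lower-order terms in $\log(1-c_{t,v})$; your proposed route for this part is looking in the wrong place.
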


This represents the first asymptotic improvement on the bound in Theorem~\ref{thm:classicbound}.  Recall that, as $v^t$ grows, the bounds from Theorem~\ref{thm:classicbound} and the subsequent improvements are all asymptotically $(t-1)v^t \ln 2$, and hence our bound is smaller by a factor of approximately $\ln v$ (provided that either $v$ is large and $t = o(v^{0.474})$, or $v$ is a prime power).  More precise calculations suggest that for prime power $v$, the bounds in Corollary~\ref{cor:CAN} improve the existing bounds whenever $t \ge 3$ and $v \ge 4$.

\subsection{Organisation and notation} \label{subsec:organisation}

The remainder of this paper is organised as follows.  In Section~\ref{sec:upperbound} we prove the upper bound of Theorem~\ref{main}, and in Section~\ref{sec:lowerbound} we establish the lower bound by providing an optimal construction.  Section~\ref{sec:covering} is devoted to the construction of small covering arrays and the proof of Corollary~\ref{cor:CAN}.  We then provide some concluding remarks and open problems in Section~\ref{sec:conclusion}.

We use standard combinatorial notation throughout this paper.  In particular, $[k]$ denotes the first $k$ positive integers, $\{ 1, 2, \hdots, k\}$.  Given a set $X$ and an integer $t$, $\binom{X}{t}$ is the collection of all $t$-subsets of $X$.  As defined previously, for an array $A$ and a subset $Q$ of its columns, $A_Q$ denotes the subarray of $A$ containing only the columns in $Q$.  We denote by $\Cov(A)$ the collection of all subsets of columns that are covered by $A$, and by $\covt(A)$ the number of sets of size $t$ in $\Cov(A)$.  Finally, we use $\lg$ for the binary logarithm, and $\ln$ for the natural logarithm.

\section{A general upper bound} \label{sec:upperbound}

We shall start our proof of Theorem~\ref{main} by proving the general upper bound. In order to do this, we shall show that the number of $t$-sets that can be covered by an array of size $v^t\times k$ is bounded by the Lagrangian of an auxiliary hypergraph, which we shall then bound.  First we present some useful preliminaries concerning Lagrangians in general.

\subsection{Lagrangians of $t$-uniform hypergraphs} \label{subsec:lagrangians}

Lagrangians, first introduced by Motzkin and Straus~\cite{MS} to give a proof of Tur\'an's theorem, have proven to be very useful in the study of extremal combinatorics.  Roughly speaking, the Lagrangian of a hypergraph determines the maximum possible density of a blow-up of the hypergraph.  We now define the Lagrangian more precisely.

\begin{defi}
Let $\mc H \subseteq {U \choose t}$ be a $t$-uniform hypergraph on a finite set $U$ of vertices. We say that a function $x:U\rightarrow \mathbb{R}$ is a legal weighting if $x(u)\geq 0$ for every $u\in U$ and $\displaystyle\sum_{u\in U} x(u)=1$. The weight polynomial of $\mc H$ evaluated at this weighting is given by
\[w(x,\mc H)=\sum_{e\in \mc H}\prod_{u\in e}x(u).\] 
The Lagrangian of $\mc H$ is defined to be $\lambda(\mc H)=\max w(x,\mc H)$, where the maximum is taken over all legal weightings $x$ of $\mc H$.
\end{defi}

We call a legal weighting $x$ optimal if $w(x, \mc H) = \lambda(\mc H)$. The following lemma, proven by Frankl and R\"odl~\cite{FR}, gives some information about the minimal supports of optimal weightings.

\begin{lemma}\label{F-R}
Let $\mc H$ be a $t$-uniform hypergraph on the vertex set $U$, and suppose $x$ is an optimal weighting where the number of vertices with non-zero weight is minimal. If $u,w\in U$ are vertices of non-zero weight, then there is an edge in $\mc H$ containing both $u$ and $w$.
\end{lemma}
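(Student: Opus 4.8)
The plan is to argue by contradiction through a local perturbation of the weighting. Suppose $u, w \in U$ both receive non-zero weight under $x$, yet no edge of $\mc H$ contains both of them. For $\delta$ in the interval $[-x(u), x(w)]$, define a new weighting $x_\delta$ by setting $x_\delta(u) = x(u) + \delta$, $x_\delta(w) = x(w) - \delta$, and $x_\delta(z) = x(z)$ for every other vertex $z$; intuitively we shift weight between $u$ and $w$. Since $x(u), x(w) > 0$, this interval is non-degenerate and contains $0$ in its interior, and each $x_\delta$ is again a legal weighting, as the weights stay non-negative and still sum to $1$.

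First I would record how the weight polynomial depends on $\delta$. Grouping the sum $w(x_\delta, \mc H) = \sum_{e \in \mc H} \prod_{z \in e} x_\delta(z)$ according to whether an edge $e$ contains neither, exactly one, or both of $u$ and $w$: the edges containing neither contribute a constant; the edges containing exactly one of $u, w$ each contribute a term that is affine in $\delta$; and the edges containing both would be the only source of a $\delta^2$ term. By hypothesis there are no edges of the last kind, so $w(x_\delta, \mc H) = w(x, \mc H) + c\,\delta$ for some constant $c = c(x, \mc H)$ independent of $\delta$. This linearity is really the crux of the argument, and checking that the quadratic term genuinely vanishes is the one place where the assumption that no edge contains both $u$ and $w$ is used.

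Next I would invoke optimality of $x$. Since $x = x_0$ is optimal, $w(x_\delta, \mc H) \le \lambda(\mc H) = w(x, \mc H)$ for every $\delta$ in the interval, i.e. $c\,\delta \le 0$ for all $\delta \in [-x(u), x(w)]$; as this interval contains both positive and negative values, we must have $c = 0$. Hence $w(x_\delta, \mc H) = \lambda(\mc H)$ for all admissible $\delta$, and in particular the endpoint choice $\delta = x(w)$ yields an optimal weighting $x_{x(w)}$. But $x_{x(w)}$ assigns weight $0$ to $w$ while leaving the weights of all other vertices non-zero (the weight of $u$ only increases), so its support is strictly smaller than that of $x$, contradicting the minimality of the support of $x$. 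Therefore no such pair $u, w$ can exist, which is exactly the claim.

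There is no substantial obstacle in this proof; the only points demanding care are verifying that every $x_\delta$ remains a legal weighting and that the coefficient of $\delta^2$ in $w(x_\delta, \mc H)$ is zero, both of which follow immediately from the hypothesis.
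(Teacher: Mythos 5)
Your proof is correct and follows the same perturbation argument as the paper: shift weight between $u$ and $w$, observe that the absence of a common edge makes the weight polynomial affine in the shift parameter, and conclude that an endpoint of the parameter interval gives an optimal weighting with strictly smaller support. The only cosmetic difference is that you explicitly deduce the linear coefficient $c$ vanishes before passing to the endpoint, whereas the paper simply notes that an affine function on a closed interval attains its maximum at an endpoint; both land on the same contradiction.
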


\begin{proof}
Suppose for contradiction that vertices $u$ and $w$ have positive weight, but there is no edge of $\mc H$ containing both of them.  We define a parametrised function $x_{\eps}: U \rightarrow \mathbb{R}$, where
\[ x_{\eps}(s) = \begin{cases}
	x(u) + \eps &\textrm{if } s = u, \\
	x(w) - \eps & \textrm{if } s = w, \\
	x(s) & \textrm{otherwise.}
	\end{cases} \]
Observe that $x_{\eps}$ is a legal weighting whenever $\eps \in [-x(u), x(w)]$, and that at the boundaries either $x_{\eps}(u)$ or $x_{\eps}(w)$ becomes zero.  Moreover, since there is no edge containing both $u$ and $w$, $w(x_{\eps}, \mc H)$ is linear in $\eps$, and hence is maximised by some $\eps^* \in \{-x(u), x(w)\}$.  However, this gives a contradiction, as $x_{\eps^*}$ is then an optimal weighting with fewer non-zero weights.  Thus $u$ and $w$ must appear together in some edge of $\mc H$.
\end{proof}

The next lemma shows that we can bound the Lagrangian $\lambda( \mc H)$ of a $t$-uniform hypergraph $\mc H$ in terms of the Lagrangians of smaller $(t-1)$-uniform hypergraphs.  Given a vertex $u \in U$, the \emph{link hypergraph} $\mc H(u)$ is a $(t-1)$-uniform hypergraph on the vertex set $U \setminus \{ u \}$, with edges $e' \in \mc H(u)$ whenever $e' \cup \{ u \} \in \mc H$.

\begin{lemma}\label{Lagrange link}
Given $t \ge 2$, a $t$-uniform hypergraph $\mc H$ on the vertex set $U$ and a legal weighting $x$, we have
\[w(x,\mc H)\leq \frac{1}{t}\sum_{u\in U}x(u)(1-x(u))^{t-1}\lambda(\mc H(u)).\]
\end{lemma}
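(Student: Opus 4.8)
The plan is to prove the inequality by a double-counting identity followed by a rescaling argument: expand $t\,w(x,\mc H)$ by counting, for each edge of $\mc H$, each of its $t$ vertices; group the resulting terms according to the chosen vertex $u$; recognise the inner sum as the weight polynomial of the link $\mc H(u)$ evaluated at a scalar multiple of $x$; and finally bound this by $\lambda(\mc H(u))$.

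First I would record the identity
\[ t\, w(x, \mc H) = \sum_{u \in U} x(u) \sum_{e' \in \mc H(u)} \prod_{v \in e'} x(v). \]
To see it, note that each $e \in \mc H$ has exactly $t$ vertices, so $t\, w(x,\mc H) = \sum_{e \in \mc H} \sum_{u \in e} \prod_{v \in e} x(v)$; pulling the factor $x(u)$ out of the term indexed by the pair $(e,u)$ and reindexing via $e' = e \setminus \{u\} \in \mc H(u)$ gives the claim. Write $W_u := \sum_{e' \in \mc H(u)} \prod_{v \in e'} x(v)$ for the inner sum; this is precisely the weight polynomial of $\mc H(u)$ evaluated at the restriction of $x$ to $U \setminus \{u\}$, which need not be a legal weighting.

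Next I would normalise. Fix $u$ with $x(u) < 1$. The vertices of $U \setminus \{u\}$ carry total $x$-weight $1 - x(u) > 0$, so $x_u(v) := x(v)/(1 - x(u))$ defines a legal weighting of $\mc H(u)$. Since $\mc H(u)$ is $(t-1)$-uniform, every product appearing in $W_u$ ranges over a $(t-1)$-set, so factoring out the common scalar yields $W_u = (1 - x(u))^{t-1}\, w(x_u, \mc H(u)) \le (1 - x(u))^{t-1} \lambda(\mc H(u))$. In the remaining degenerate case $x(u) = 1$, every other vertex has weight $0$, so each product over a $(t-1)$-set vanishes (here $t \ge 2$ is used), whence $W_u = 0 = (1-x(u))^{t-1}\lambda(\mc H(u))$ and the same bound holds. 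Substituting $W_u \le (1-x(u))^{t-1}\lambda(\mc H(u))$ into the identity above and dividing through by $t$ gives exactly the stated inequality.

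This is fundamentally a bookkeeping argument, so I do not expect a genuine obstacle; the only points that need care are the normalisation step — verifying that the rescaled weighting $x_u$ on the link is legal and that the weight polynomial is homogeneous of degree exactly $t-1$ — together with the harmless edge case $x(u) = 1$, where the rescaling is undefined but both sides of the inequality are zero.
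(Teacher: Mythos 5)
Your proof is correct and follows essentially the same route as the paper's: the same double-counting identity, the same rescaling $x_u(v) = x(v)/(1-x(u))$ to a legal weighting of the link, and the same bound by $\lambda(\mc H(u))$. The only (inessential) difference is how the degenerate case $x(u)=1$ is dispatched — you handle it termwise, while the paper observes up front that if any vertex has weight $1$ then $w(x,\mc H)=0$ and the inequality is trivial; both are fine.
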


\begin{proof}
Note that if $x(u) = 1$ for any $u \in U$, then $w(x, \mc H) = 0$, and the inequality trivially holds.  Hence we may assume $x(u) < 1$ for all $u \in U$.  Since every edge of $\mc H$ has exactly $t$ vertices, double-counting gives
\[ w(x, \mc H) = \sum_{e \in \mc H} \prod_{u \in e} x(u) = \frac{1}{t} \sum_{u \in U} \sum_{u \in e \in \mc H} \prod_{w \in e} x(w) = \frac{1}{t} \sum_{u\in U} x(u) \sum_{e' \in \mc H(u)} \prod_{w \in e'} x(w). \]
As $x$ is a legal weighting, for every vertex $u$, $x$ must distribute a total weight of $1 - x(u)$ on the vertices in $U \setminus \{ u \}$.  Thus we can rescale the weights to obtain a legal weighting for the link hypergraph $\mc H(u)$ by defining $x_u(w) = \frac{x(w)}{1 - x(u)}$ for all vertices $w \in U \setminus \{ u \}$.  We then have
\begin{align*}
	w(x, \mc H) &= \frac{1}{t} \sum_{u\in U} x(u) \sum_{e' \in \mc H(u)} \prod_{w \in e'} x(w) = \frac{1}{t} \sum_{u \in U} x(u) (1 - x(u))^{t-1} \sum_{e' \in \mc H(u)} \prod_{w \in e'} x_u(w) \\
	&= \frac{1}{t} \sum_{u \in U} x(u) (1 - x(u))^{t-1} w(x_u, \mc H(u)) \le \frac{1}{t} \sum_{u \in U} x(u) (1 - x(u))^{t-1} \lambda (\mc H(u)),
\end{align*}
where the inequality follows from the definition of the Lagrangian.
\end{proof}

The final lemma of this subsection concerns the Lagrangian of the $v$-fold tensor product of a hypergraph with itself.  Given $t$-uniform hypergraphs, $\mc H_1,\hdots, \mc H_v$ on vertex sets $U_1,\hdots,U_v$ respectively, their tensor product, denoted $\otimes_{\ell=1}^v\mc H_\ell$, is a $t$-uniform hypergraph on the vertex set $\prod_{\ell=1}^v U_{\ell}$ with edges
\[\otimes_{\ell=1}^v\mc H_\ell = \left\{\{(u_{1,1},\hdots,u_{v,1}),\hdots,(u_{1,t},\hdots,u_{v,t})\} : \forall \ell \in [v], \; \{u_{\ell,1},\hdots,u_{\ell,t}\}\in \mc H_\ell \right\}.\]
Note that every $t$-tuple of edges $(e_1,\hdots,e_v) \in \prod_{\ell=1}^v \mc H_\ell$ gives rise to $(t!)^{v-1}$ edges in $\otimes_{\ell=1}^v\mc H_\ell$, as the vertices can be combined in every possible order.  We write $\mc H^{\otimes v}$ for the hypergraph obtained by taking a tensor product of $v$ copies of a hypergraph $\mc H$.

\begin{lemma}\label{Lagrange tensor}
For every $t$-uniform hypergraph $\mc H$,
\[\lambda(\mc H^{\otimes v})=\lambda(\mc H).\]
\end{lemma}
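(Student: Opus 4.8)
The plan is to prove the two inequalities $\lambda(\mc H^{\otimes v}) \ge \lambda(\mc H)$ and $\lambda(\mc H^{\otimes v}) \le \lambda(\mc H)$ separately. For the lower bound, I would take an optimal legal weighting $x$ of $\mc H$ on vertex set $U$ and lift it to the product vertex set $U^v$ by setting $y(u_1, \hdots, u_v) = \prod_{\ell=1}^v x(u_\ell)$. This is legal since $\sum_{(u_1,\hdots,u_v)} \prod_\ell x(u_\ell) = \bigl(\sum_u x(u)\bigr)^v = 1$. To evaluate $w(y, \mc H^{\otimes v})$, I would use the observation made in the paragraph preceding the lemma: each $v$-tuple $(e_1, \hdots, e_v) \in \prod_\ell \mc H$ of edges contributes $(t!)^{v-1}$ edges to the product, and each such product-edge has weight $\prod_{j=1}^t y(u_{1,j}, \hdots, u_{v,j}) = \prod_{\ell=1}^v \prod_{j=1}^t x(u_{\ell,j})$, which depends only on the tuple $(e_1,\hdots,e_v)$ and not on the ordering. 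Summing, $w(y,\mc H^{\otimes v}) = (t!)^{v-1} \sum_{(e_1,\hdots,e_v)} \prod_\ell \prod_{u \in e_\ell} x(u)$; but this needs care, because in $w(x,\mc H)$ an edge $e$ is counted once, whereas $\prod_{u \in e} x(u)$ already does not depend on an ordering. The clean way is: for a single copy, $\sum_{e \in \mc H} \prod_{u\in e} x(u) = \lambda(\mc H)$ by optimality, and when we form the product edge from an ordered system of representatives the combinatorial factor $(t!)^{v-1}$ exactly accounts for the overcounting relative to unordered edges, leaving $w(y, \mc H^{\otimes v}) = \lambda(\mc H)^v \cdot (\text{correction})$ — so I would instead argue directly that $w(y, \mc H^{\otimes v}) = \prod_{\ell=1}^{v} w(x, \mc H) = \lambda(\mc H)^v$ if one naively multiplied, which is \emph{not} what we want. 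The resolution: the correct lifted weight to use does not make $w$ multiplicative; rather, one should check that $w(y, \mc H^{\otimes v})$ equals $\lambda(\mc H)$ on the nose. I expect the clean route is to first reduce, via Lemma~\ref{F-R}, to the case where $\mc H$ has a single edge (or is a disjoint union handled componentwise), since an optimal minimal-support weighting is supported on a clique, and for hypergraphs whose edge set is ``clique-like'' the Lagrangian has a known closed form; then the tensor product of single-edge hypergraphs is again single-edge-like and the identity $\lambda = \lambda$ can be verified by direct computation with the closed form.

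For the upper bound $\lambda(\mc H^{\otimes v}) \le \lambda(\mc H)$, I would take an optimal weighting $z$ on $U^v$ and \textbf{marginalise}: define $x^{(\ell)}(u) = \sum_{w \in U^v : w_\ell = u} z(w)$, the $\ell$-th marginal, which is a legal weighting of $\mc H$. The edge weight $\prod_{j=1}^{t} z(w^{(j)})$ for a product edge built from $(e_1,\hdots,e_v)$ should, after summing over all product edges and regrouping, be bounded by a product $\prod_{\ell=1}^{v} w(x^{(\ell)}, \mc H)$ using a suitable correlation / rearrangement inequality, and each factor is at most $\lambda(\mc H)$; but since we want $\lambda(\mc H)$ and not $\lambda(\mc H)^v$, there must be a normalisation absorbing the extra powers — concretely, I would instead use the \emph{geometric mean}: symmetrising $z$ over the $S_v$-action permuting the $v$ coordinates does not decrease $w$ (by convexity, as $w$ restricted to the relevant simplex section is... actually $w$ is multilinear, so averaging over the group action keeps it a valid weighting and, by a convexity/Jensen argument applied to the product structure, $w(\bar z, \mc H^{\otimes v}) \ge w(z, \mc H^{\otimes v})$), after which one may assume $z$ is coordinate-symmetric, and then a single marginal $x = x^{(1)} = \cdots = x^{(v)}$ controls everything and the identity $w(z, \mc H^{\otimes v}) \le w(x, \mc H) \le \lambda(\mc H)$ follows from an application of the AM–GM or the generalised Hölder inequality to the sum defining $w(z, \mc H^{\otimes v})$, viewing it as a multilinear form in the $v$ marginals.

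The step I expect to be the main obstacle is getting the \textbf{bookkeeping of the $(t!)^{v-1}$ factor and the normalisation exactly right} so that the powers of $\lambda(\mc H)$ collapse to a single power — it is easy to produce a spurious $\lambda(\mc H)^v$ on one side. The key structural fact that makes the collapse happen is that a legal weighting on $U^v$ has total mass $1$ (not $v$), so its marginals individually have mass $1$, and the product edge uses one vertex-weight per coordinate per rank; carefully tracking which indices are ``tied together'' by being in a common edge versus ``free'' across the $v$ factors is where the argument must be precise. Once the single-edge / clique reduction via Lemma~\ref{F-R} is in place, the computation should reduce to verifying that if $\lambda(\mc H)$ is attained by uniform weight $1/m$ on an $m$-vertex clique-like structure with a certain closed-form value, then the tensor product attains the same value, which is a short symmetric-function identity.
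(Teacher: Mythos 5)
Both halves of your proposal have gaps; in each case you identified a real obstacle but missed the much simpler move that resolves it.

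\textbf{Lower bound.} The product weighting $y(u_1,\hdots,u_v)=\prod_\ell x(u_\ell)$ does not work, as you yourself discover: grouping edges of $\mc H^{\otimes v}$ by their $v$-tuple of projections and using the $(t!)^{v-1}$ multiplicity, one gets $w(y,\mc H^{\otimes v})=(t!)^{v-1}\,\lambda(\mc H)^v$. Since $\lambda(\mc H)\le 1/t!$ always, this is $\le\lambda(\mc H)$ with equality only in the degenerate case $\lambda(\mc H)=1/t!$, so you get a weaker lower bound, not the one you need. Your proposed rescue---reducing via Lemma~\ref{F-R} to a ``single edge'' or ``clique-like'' hypergraph---does not go through: Lemma~\ref{F-R} only guarantees the support is $2$-covered (every pair of support vertices lies in some edge), which is far from being a single edge, and there is no closed form for $\lambda$ of such hypergraphs to plug in. The correct construction is the \emph{diagonal} weighting: set $\widehat{x}(u,u,\hdots,u)=x(u)$ and $\widehat{x}\equiv 0$ off the diagonal. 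Every edge $\{u_1,\hdots,u_t\}\in\mc H$ yields the diagonal edge $\{(u_1,\hdots,u_1),\hdots,(u_t,\hdots,u_t)\}\in\mc H^{\otimes v}$ with exactly the same weight, so $w(\widehat{x},\mc H^{\otimes v})\ge w(x,\mc H)=\lambda(\mc H)$, with no combinatorial factors to track.

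\textbf{Upper bound.} Marginalising is the right instinct, but the symmetrisation, correlation inequalities, and H\"older/AM--GM machinery are all unnecessary and, as sketched, would tend to produce the unwanted $\lambda(\mc H)^v$. What works is to take the marginal on a \emph{single} coordinate, say $x(u)=\sum_{u_2,\hdots,u_v}\widehat{x}((u,u_2,\hdots,u_v))$, and compare sums termwise. Expanding $w(x,\mc H)$ yields a sum over all choices of $\{u_1,\hdots,u_t\}\in\mc H$ together with arbitrary $u_{j,i}\in U$ for $2\le j\le v$, $i\in[t]$, of the product $\prod_i\widehat{x}((u_i,u_{2,i},\hdots,u_{v,i}))$. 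All terms are non-negative, and the subcollection where each $\{u_{j,1},\hdots,u_{j,t}\}\in\mc H$ is in bijection with the edges of $\mc H^{\otimes v}$. Dropping the other (non-negative) terms gives $w(x,\mc H)\ge w(\widehat{x},\mc H^{\otimes v})=\lambda(\mc H^{\otimes v})$ directly. There is no need to make $z$ coordinate-symmetric, and no $v$-th power ever appears: the inequality is one-sided and uses only one marginal, which is precisely why the normalisation worry you raised never materialises.
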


\begin{proof}
Let $U$ be the vertex set of $\mc H$.  First consider an optimal weighting $x$ for $\mc H$, and define a legal weighting $\widehat{x}$ on $U^v$ as follows:
\[ \widehat{x}((u_1,u_2,\hdots,u_v)) = \begin{cases} x(u_1) & \textrm{if } u_1 = u_2 = \hdots =u_v, \\ 0 & \textrm{otherwise.} \end{cases} \]
Now for every edge $\{u_1,\hdots,u_t\}$ in $\mc H$, the edge $\{(u_1,\hdots,u_1),\hdots,(u_t,\hdots,u_t)\}$ appears in $\mc H^{\otimes v}$ with the same weight, and so
\[ \lambda(\mc H^{\otimes v})\geq w(\widehat{x},\mc H^{\otimes v})\geq w(x,\mc H)=\lambda(\mc H). \]

For the reverse inequality, let $\widehat{x}$ be an optimal weighting for $\mc H^{\otimes v}$, and define the legal weighting $x$ on $U$ by $x(u) = \sum_{u_2,\hdots,u_v \in U} \widehat{x}((u,u_2,\hdots,u_v))$.  We then have
\begin{align*}
	\lambda(\mc H) &\ge w(x, \mc H) = \sum_{\{u_1, \hdots, u_t \} \in \mc H} \prod_{i=1}^t x(u_i) = \sum_{\substack{\{u_1,\hdots, u_t \} \in \mc H \\ u_{j,i} \in U, \; 2 \le j \le v, \; i\in[t]}} \prod_{i=1}^t \widehat{x}((u_i, u_{2,i},\hdots,u_{v,i}))
\\
&\ge \sum_{\substack{\{u_1, \hdots, u_t \} \in \mc H \\ \{ u_{j,1}, \hdots, u_{j,t}\} \in \mc H,\; 2\leq j\leq v}} \prod_{i=1}^t \widehat{x}((u_i, u_{2,i},\hdots,u_{v,i}))
\\
&= \sum_{\{ (u_1,u_{2,1},\hdots,u_{v,1}), \hdots, (u_t,u_{2,t},\hdots,u_{v,t}) \} \in \mc H^{\otimes v}} \prod_{i=1}^t \widehat{x}((u_i,u_{2,i},\hdots,u_{v,i}))= w(\widehat{x}, \mc H^{\otimes v}) = \lambda(\mc H^{\otimes v}),
\end{align*}
and thus $\lambda(\mc H) = \lambda(\mc H^{\otimes v})$.
\end{proof}

\subsection{The upper bound} \label{subsec:upperproof}

With these preliminaries in place, we may proceed with the proof of the upper bound in Theorem~\ref{main}.  As mentioned earlier, we shall bound the number of $t$-sets that can be covered by the Lagrangian of an auxiliary hypergraph $\mc H_{t,v}$, which we now introduce.

The vertex set of $\mc H_{t,v}$ is $[v]^{v^t}$ and  $t$ vectors $\vec{y}_1, \hdots, \vec{y}_t\in [v]^{v^t} $ form an edge in $\mc H_{t,v}$ whenever all $v^t$ vectors in $[v]^t$ appear as rows of the $v^t \times t$ matrix whose columns are $\vec{y}_1, \hdots, \vec{y}_t$.  Note that this condition implies the vectors $\vec{y}_i$ are pairwise-distinct, so this is indeed a $t$-uniform hypergraph.

For a $v^t\times k$ array $A$, whose entries we shall assume to belong to $[v]$, and $\vec{y}\in [v]^{v^t}$, define $B_{\vec{y}} = \{ a \in [k] : A_{\{a\}} = \vec{y} \}$ and note that the sets $B_{\vec{y}}$ partition the set $[k]$ of columns of $A$.  Observe that a $t$-set $Q \subseteq [k]$ can only be covered by $A$ if all elements of $Q$ belong to different parts $B_{\vec{y}}$, as otherwise two of the columns in $A_Q$ will be identical.  However, not all such $t$-sets are covered. A $t$-set $Q \subseteq [k]$ is covered by $A$ if and only if the columns of $A_Q$ form an edge of $\mc H_{t,v}$.

We can hence count the number of covered $t$-sets, finding
\[ \covt(A) = \sum_{e \in \mc H_{t,v}} \prod_{\vec{y} \in e} \card{B_{\vec{y}}}. \]
Since the sets $\left\{ B_{\vec{y}} : \vec{y} \in [v]^{v^t} \right\}$ partition $[k]$, the function $x : [v]^{v^t} \rightarrow \mathbb{R}$ given by $x(\vec{y}) = \frac{1}{k} \card{B_{\vec{y}}}$ is a legal weighting of the vertices of $\mc H_{t,v}$.  Hence
\[ \covt(A) = \sum_{e \in \mc H_{t,v}} \prod_{\vec{y} \in e} \card{B_{\vec{y}}} = k^t \sum_{e \in \mc H_{t,v}} \prod_{\vec{y} \in e} x(\vec{y}) = k^t w(x, \mc H_{t,v}) \le k^t \lambda( \mc H_{t,v}). \]
Now note that the hypergraph $\mc H_{t,v}$ is in fact independent of the array $A$ (which only determines the weighting of the vertices), and hence $k^t \lambda( \mc H_{t,v})$ bounds the number of $t$-sets that can be covered by any array of size $v^t\times k$.  The following proposition therefore gives the desired upper bound for Theorem~\ref{main}.

\begin{prop}\label{upperbound}
For all $t \ge 1$,
\[ \lambda(\mc H_{t,v}) \le \frac{c_{t,v}}{t!}=\frac{1}{t!}\prod_{i=0}^{t-1}\frac{v^t-v^{i}}{v^t-1}. \]
\end{prop}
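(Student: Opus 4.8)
The plan is to bound $\lambda(\mc H_{t,v})$ by induction on $t$, using Lemma~\ref{Lagrange link} to pass from the $t$-uniform hypergraph to its links, which I expect to be $(t-1)$-uniform hypergraphs closely related to $\mc H_{t-1,v}$. The base case $t=1$ is trivial: $\mc H_{1,v}$ is a single vertex (the constant vector in $[v]^v$ is the only ``array'' covering $[v]^1$, up to relabeling — actually all $v!$ permutation vectors), and $\lambda$ of any nonempty $1$-uniform hypergraph on a legal weighting is $\max_u x(u) \le 1 = c_{1,v}/1!$, with the bound $c_{1,v} = \prod_{i=0}^{0}\frac{v-v^0}{v-1} = 1$. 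So the real content is the inductive step.

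For the inductive step, I would first analyze the link hypergraph $\mc H_{t,v}(\vec y)$ for a fixed vertex $\vec y \in [v]^{v^t}$. An edge of the link is a set of $t-1$ vectors $\vec y_1,\dots,\vec y_{t-1} \in [v]^{v^t}$ such that, together with $\vec y$, the $v^t \times t$ matrix with those columns has all of $[v]^t$ as rows. The key structural observation is that $\vec y$ partitions the $v^t$ row-indices into $v$ blocks of size $v^{t-1}$ according to the value of the $\vec y$-coordinate; on each block, the remaining $t-1$ columns must realize all of $[v]^{t-1}$ exactly once. Thus the link hypergraph embeds into (a vertex-blow-up or tensor-type construction involving) $v$ disjoint copies of $\mc H_{t-1,v}$, one per block. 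I would want to argue $\lambda(\mc H_{t,v}(\vec y)) \le \lambda(\mc H_{t-1,v})$ — intuitively because a weighting on $[v]^{v^t}$ restricted to a link decomposes over the $v$ blocks, and coupling this with Lemma~\ref{Lagrange tensor} (which says tensoring does not increase the Lagrangian) gives the bound; alternatively one shows directly the link is a sub-hypergraph of $\mc H_{t-1,v}^{\otimes v}$ after identifying coordinates block-wise, so $\lambda(\mc H_{t,v}(\vec y)) \le \lambda(\mc H_{t-1,v}^{\otimes v}) = \lambda(\mc H_{t-1,v})$.

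Granting that, Lemma~\ref{Lagrange link} gives, for any legal weighting $x$,
\[
w(x,\mc H_{t,v}) \le \frac{1}{t}\sum_{\vec y} x(\vec y)(1-x(\vec y))^{t-1}\lambda(\mc H_{t,v}(\vec y)) \le \frac{\lambda(\mc H_{t-1,v})}{t}\sum_{\vec y} x(\vec y)(1-x(\vec y))^{t-1}.
\]
The function $z \mapsto z(1-z)^{t-1}$ is concave-ish but not globally concave on $[0,1]$; however I only need an upper bound on $\sum_{\vec y} x(\vec y)(1-x(\vec y))^{t-1}$ subject to $\sum x(\vec y)=1$ and $x(\vec y)\ge 0$ over a support of size $v^t$. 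By a standard smoothing/Lagrange-multiplier argument the maximum is attained when all nonzero weights are equal, and one should check the optimal support size: putting mass $1/m$ on $m$ vertices gives $m \cdot \frac1m (1-\frac1m)^{t-1} = (1-\frac1m)^{t-1}$, which increases in $m$, so one wants $m$ as large as possible. But $m$ cannot exceed $v^t$ — and in fact the combinatorial structure of $\mc H_{t,v}$ further constrains which supports can carry positive weight in an \emph{optimal} weighting (via Lemma~\ref{F-R}, every two positive-weight vertices lie in a common edge, which forces them to be ``compatible'' columns). I expect the extremal configuration to be exactly $\frac{v^t-1}{v-1}$ equally weighted vectors (matching the divisibility condition $\frac{v^t-1}{v-1}\mid k$ in the theorem), coming from the columns of a simplex / the points of $\mathrm{PG}(t-1,v)$ lifted appropriately. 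Substituting $m = \frac{v^t-1}{v-1}$ yields $\bigl(1-\frac{v-1}{v^t-1}\bigr)^{t-1} = \bigl(\frac{v^t-v}{v^t-1}\bigr)^{t-1}$, and combined with the inductive hypothesis $\lambda(\mc H_{t-1,v}) \le \frac{1}{(t-1)!}\prod_{i=0}^{t-2}\frac{v^{t-1}-v^i}{v^{t-1}-1}$ this should telescope — after rewriting $\frac{v^{t-1}-v^i}{v^{t-1}-1} = \frac{v^t-v^{i+1}}{v^t-v}$ — to exactly $\frac{c_{t,v}}{t!}$.

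The main obstacle will be making the two ``expected'' steps rigorous: (1) pinning down $\lambda(\mc H_{t,v}(\vec y)) \le \lambda(\mc H_{t-1,v})$ — the link is not literally $\mc H_{t-1,v}^{\otimes v}$ but sits inside it after the right block-wise identification of coordinates, and one must check that an arbitrary (not necessarily block-supported) weighting on the link cannot beat $\lambda(\mc H_{t-1,v})$, which is where Lemma~\ref{Lagrange tensor} does the work; and (2) controlling $\sum_{\vec y} x(\vec y)(1-x(\vec y))^{t-1}$ — rather than a general optimization over supports of size $v^t$ (which would give the weaker bound $(1-v^{-t})^{t-1}$), one must exploit the hypergraph structure via Lemma~\ref{F-R} to restrict to supports of size at most $\frac{v^t-1}{v-1}$ of compatible columns. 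If that structural bound on the support is hard to extract cleanly, an alternative is to prove the whole proposition by a direct induction that bounds $w(x,\mc H_{t,v})$ without separately optimizing — tracking a single potential through the link recursion — but I would first attempt the cleaner two-step argument above.
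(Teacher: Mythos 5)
Your proposal takes essentially the same route as the paper's proof: induction on $t$ via Lemma~\ref{Lagrange link}, identifying the links as $\mc H_{t-1,v}^{\otimes v}$ and invoking Lemma~\ref{Lagrange tensor}, bounding the support of an optimal weighting by $\frac{v^t-1}{v-1}$ using Lemma~\ref{F-R} together with the Plackett--Burman bound, and optimizing $\sum_i x_i(1-x_i)^{t-1}$ with equal weights on that support before telescoping. The one place you hedge --- whether the link is literally isomorphic to $\mc H_{t-1,v}^{\otimes v}$ or merely embeds into it --- is resolved in the paper (Lemma~\ref{linkprod} proves the isomorphism, up to isolated vertices), but since you only need an upper bound this does not affect the argument.
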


We first establish a few simple lemmas that we shall use when proving Proposition~\ref{upperbound}.  The first shows that there are optimal weightings of $\mc H_{t,v}$ that are supported on a relatively small number of vertices.

\begin{lemma}\label{nonzero weights}
There is an optimal weighting $x$ of $\mc H_{t,v}$ for which
\[ \card{\mathrm{supp}(x)} = \card{ \left\{ \vec{y} \in [v]^{v^t} : x(\vec{y})\neq 0\right\}} \leq \frac{v^t-1}{v-1}.\]
\end{lemma}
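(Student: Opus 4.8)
The plan is to combine the Frankl--R\"odl structure lemma (Lemma~\ref{F-R}) with a short linear-algebra argument. First I would apply Lemma~\ref{F-R}: fix an optimal weighting $x$ of $\mc H_{t,v}$ whose support $Y = \mathrm{supp}(x)$ is as small as possible, so that every pair of distinct vectors $\vec y, \vec y' \in Y$ is contained in a common edge of $\mc H_{t,v}$. It then suffices to prove $|Y| \le \frac{v^t-1}{v-1}$, and we may assume $|Y| \ge 2$, as otherwise the claim is trivial.

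The next step is to extract the relevant combinatorial consequence of the statement ``$\vec y, \vec y'$ lie in a common edge''. If $\vec y = \vec y_1, \vec y' = \vec y_2, \vec y_3, \dots, \vec y_t$ form an edge of $\mc H_{t,v}$, then for each pattern $(a,b) \in [v]^2$ the coordinates $i$ with $(\vec y(i), \vec y'(i)) = (a,b)$ must, via the remaining columns $\vec y_3, \dots, \vec y_t$, realise all $v^{t-2}$ patterns in $[v]^{t-2}$; hence there are at least $v^{t-2}$ such coordinates. Summing these $v^2$ counts gives exactly $v^t$, so in fact precisely $v^{t-2}$ coordinates $i$ satisfy $(\vec y(i), \vec y'(i)) = (a,b)$, for every $(a,b) \in [v]^2$. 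In particular, since $|Y| \ge 2$, every $\vec y \in Y$ is \emph{balanced}: it takes each value in $[v]$ exactly $v^{t-1}$ times.

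Now I would translate this into orthogonality. For $\vec y \in Y$ and $a \in [v]$, let $\mathbf 1_{\vec y, a} \in \mathbb R^{v^t}$ be the indicator vector of $\{ i : \vec y(i) = a \}$, and set $\mathbf u_{\vec y, a} = \mathbf 1_{\vec y, a} - \frac{1}{v} \mathbf 1$, where $\mathbf 1$ is the all-ones vector. Using $\langle \mathbf 1_{\vec y, a}, \mathbf 1 \rangle = v^{t-1}$, one checks directly that each $\mathbf u_{\vec y, a}$ is orthogonal to $\mathbf 1$ and that $\sum_{a \in [v]} \mathbf u_{\vec y, a} = 0$. The key computation is that for distinct $\vec y, \vec y' \in Y$ and any $a, b \in [v]$,
\[ \langle \mathbf u_{\vec y, a}, \mathbf u_{\vec y', b} \rangle = v^{t-2} - \frac{1}{v} v^{t-1} - \frac{1}{v} v^{t-1} + \frac{1}{v^2} v^t = 0, \]
using the ``exactly $v^{t-2}$'' count above; hence the subspaces $V_{\vec y} = \mathrm{span}\{ \mathbf u_{\vec y, a} : a \in [v] \}$, for $\vec y \in Y$, are pairwise orthogonal. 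Within a single $\vec y$, the fact that $\langle \mathbf 1_{\vec y, a}, \mathbf 1_{\vec y, a'} \rangle = v^{t-1}[a = a']$ shows the Gram matrix of $(\mathbf u_{\vec y, a})_{a \in [v]}$ equals $v^{t-1} I - v^{t-2} J$, which (having eigenvalue $0$ on the all-ones direction and $v^{t-1}$ elsewhere) has rank $v-1$; thus $\dim V_{\vec y} = v-1$. Since the $V_{\vec y}$ are pairwise orthogonal and all contained in the hyperplane $\mathbf 1^{\perp} \cong \mathbb R^{v^t - 1}$, their dimensions sum to at most $v^t - 1$, so $(v-1)|Y| \le v^t - 1$, as required.

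The inner-product and rank computations are routine; the one step requiring care is the combinatorial claim that membership of a pair in an edge forces the joint value-distribution of the two columns to be \emph{exactly} uniform over $[v]^2$ --- it is this ``exactly'' (rather than merely ``at least'') that makes the cross terms vanish and drives the final bound. I expect this point, together with the bookkeeping detail of placing the subspaces inside $\mathbf 1^{\perp}$ so as to get $v^t - 1$ rather than $v^t$, to be the only genuinely delicate ingredients.
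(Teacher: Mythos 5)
Your proof is correct, and for the first half it matches the paper exactly: apply Lemma~\ref{F-R} to a minimal-support optimal weighting, and observe that any two support vectors $\vec y, \vec y'$ lying in a common edge forces the joint value-distribution of the two corresponding columns to be exactly uniform over $[v]^2$ (so that each pair $(a,b)$ occurs in precisely $v^{t-2}$ coordinates). Where you diverge is in the final step. The paper notices at this point that the support vectors are therefore the columns of a $(v^t; 2, K, v)$-orthogonal array, and simply invokes the Plackett--Burman bound (Theorem~\ref{thm:OAstrengthtwo}) to conclude $K \le \frac{v^t-1}{v-1}$. You instead re-derive that bound from scratch via the standard rank argument: centring the indicator vectors $\mathbf 1_{\vec y, a}$, computing that the resulting rank-$(v-1)$ subspaces $V_{\vec y}$ are pairwise orthogonal and sit inside $\mathbf 1^\perp$, and summing dimensions. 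Your linear-algebra computations are all correct (the Gram matrix $v^{t-1}I - v^{t-2}J$ indeed has rank $v-1$, and the cross inner products vanish precisely because of the ``exactly $v^{t-2}$'' count you carefully established). The net effect is that your write-up is self-contained at the cost of reproducing a proof of the Plackett--Burman/Rao bound that the paper is content to cite; the paper's version is shorter and makes the connection to orthogonal arrays explicit, which is thematically apt given the subject of the paper.
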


\begin{proof}
Let $x$ be an optimal weighting of $\mc H_{t,v}$ minimising the number of vectors with non-zero weight. By Lemma~\ref{F-R}, if $\vec{y}$ and $\vec{z}$ are vectors with non-zero weight, then there must be an edge $e \in \mc H_{t,v}$ containing both $\vec{y}$ and $\vec{z}$.  Since every vector in $[v]^t$ appears as a row in the matrix whose columns are the vectors in $e$, it follows that for each choice of $a, b \in [v]$, there are exactly $v^{t-2}$ coordinates $i$ where $y_i = a$ and $z_i = b$.  In particular, the vectors of non-zero weight form the columns of an $(v^t;2,k,v)$-orthogonal array, and so by Theorem~\ref{thm:OAstrengthtwo} their number is bounded from above by $\frac{v^t-1}{v-1}$.
\end{proof}

The next lemma describes the link hypergraphs of $\mc H_{t,v}$.

\begin{lemma}\label{linkprod}
For any vertex $\vec{y}\in [v]^{v^t}$ in $\mc H_{t,v}$ of positive degree, $\mc H_{t,v}(\vec y)\cong \mc H_{t-1,v}^{\otimes v}$.
\end{lemma}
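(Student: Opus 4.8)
The plan is to produce an explicit isomorphism induced by partitioning the coordinate set $[v^t]$ according to the values taken by $\vec y$. Since $\vec y$ has positive degree it lies in some edge $\{\vec y, \vec y_2, \hdots, \vec y_t\}$ of $\mc H_{t,v}$, and as every vector of $[v]^t$ occurs as a row of the corresponding $v^t \times t$ matrix, exactly $v^{t-1}$ of those rows have first coordinate $a$, for each $a \in [v]$. Writing $I_a = \{ i \in [v^t] : y_i = a \}$, we therefore have $\card{I_a} = v^{t-1}$, and $I_1, \hdots, I_v$ is a partition of $[v^t]$. For each $a$ fix an arbitrary bijection $I_a \to [v^{t-1}]$; this identifies $[v]^{I_a}$ with $[v]^{v^{t-1}}$, the vertex set of $\mc H_{t-1,v}$, so that the restriction of a vector $\vec z \in [v]^{v^t}$ to the coordinates in $I_a$ becomes a vector $\vec z^{(a)} \in [v]^{v^{t-1}}$. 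The map $\phi : \vec z \mapsto (\vec z^{(1)}, \hdots, \vec z^{(v)})$ is then a bijection from $[v]^{v^t}$ onto $\bigl([v]^{v^{t-1}}\bigr)^v$, the vertex set of $\mc H_{t-1,v}^{\otimes v}$.

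The crux is to check that $\phi$ matches up the edges. Given vectors $\vec z_1, \hdots, \vec z_{t-1} \in [v]^{v^t}$, form the $v^t \times t$ matrix $M$ with columns $\vec y, \vec z_1, \hdots, \vec z_{t-1}$; then $\{\vec z_1, \hdots, \vec z_{t-1}\}$ is an edge of $\mc H_{t,v}(\vec y)$ precisely when every vector of $[v]^t$ appears as a row of $M$. The rows of $M$ with first entry $a$ are exactly those indexed by $i \in I_a$, and such a row equals $(a, z_{1,i}, \hdots, z_{t-1,i})$. Hence, since the $I_a$ partition $[v^t]$ into blocks of size $v^{t-1} = \card{[v]^{t-1}}$, all of $[v]^t$ appears among the rows of $M$ if and only if, for every $a \in [v]$, all of $[v]^{t-1}$ appears among the vectors $(z_{1,i}, \hdots, z_{t-1,i})$ with $i \in I_a$ --- that is, if and only if $\{\vec z_1^{(a)}, \hdots, \vec z_{t-1}^{(a)}\}$ is an edge of $\mc H_{t-1,v}$ for each $a$. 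Reading off the definition of the tensor product with the block $I_a$ playing the role of the $a$-th factor, this is exactly the condition that $\{\phi(\vec z_1), \hdots, \phi(\vec z_{t-1})\}$ be an edge of $\mc H_{t-1,v}^{\otimes v}$; note also that this condition forces the $\vec z_j^{(a)}$ to be pairwise distinct, so $\phi$ does send genuine $(t-1)$-sets to genuine $(t-1)$-sets. (Strictly speaking $\vec y$ itself is not a vertex of the link $\mc H_{t,v}(\vec y)$, while its image $\phi(\vec y)$, the tuple of constant vectors, is an isolated vertex of $\mc H_{t-1,v}^{\otimes v}$; so the identification above is an isomorphism up to a single isolated vertex, which in any case does not affect the Lagrangian we will apply it to.)

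The remaining point is that the construction is independent of the auxiliary bijections $I_a \to [v^{t-1}]$: permuting the coordinates of a vector in $[v]^{v^{t-1}}$ only permutes the rows of the defining matrices of $\mc H_{t-1,v}$ and hence is an automorphism, so different choices yield isomorphic copies. I expect the main difficulty to be purely organisational --- keeping the block decomposition $[v^t] = I_1 \sqcup \cdots \sqcup I_v$ in lockstep with the $v$ tensor factors, and phrasing the row-by-row analysis of $M$ so that the ``if and only if'' is transparent in both directions.
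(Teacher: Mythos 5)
Your proof is correct and follows essentially the same route as the paper's: both partition the $v^t$ coordinates into blocks $I_a$ according to the value of $\vec y$, identify each block with a copy of $[v]^{v^{t-1}}$, and then match the row condition in $M$ block-by-block with the edge condition in each tensor factor (the paper places $\vec y$ as the last column rather than the first, a cosmetic difference). In fact you are slightly more careful than the paper in explicitly noting that $\phi(\vec y)$ is an isolated vertex of $\mc H_{t-1,v}^{\otimes v}$ --- accounting for the fact that the link has one fewer vertex than the tensor product --- and that the choice of bijections $I_a \to [v^{t-1}]$ is immaterial; the paper leaves both points implicit.
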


\begin{proof}
Recall that we have an edge $\{ \vec{y}_1, \hdots, \vec{y}_{t-1}, \vec{y} \} \in \mc H_{t,v}$ if and only if the $v^t \times t$ matrix $M$ formed with these column vectors has all vectors in $[v]^t$ as row vectors.  In particular, $\vec{y}$ must have $v^{t-1}$ entries equal to $a$ for every $a\in [v]$. Given $a\in[v]$, denote by $M^{(a)}$ the $v^{t-1}\times (t-1)$ matrix formed by taking those rows of $M$ that end with $a$, and then deleting the last (all-$a$) column. Since the rows of $M$ contain every vector of $[v]^t$ ending in $a$, it follows that the rows of $M^{(a)}$ consist of all vectors in $[v]^{t-1}$. In particular, the columns of $M^{(a)}$ form an edge in $\mc H_{t-1,v}$.

In other words, after a possible reordering of the rows of $M$, for $1 \le i \le t-1$ the vector $\vec{y}_i = ( \vec{w}_{1,i}, \vec{w}_{2,i},\hdots,\vec{w}_{v,i})^T$ should be the concatenation of vectors $\vec{w}_{j,i}\in [v]^{v^{t-1}}$, $j\in [v]$, such that for $1\leq j\leq v$ we have $\{ \vec{w}_{j,1}, \hdots, \vec{w}_{j,t-1} \}\in \mc H_{t-1,v}$.  This correspondence between the edges $\{ \vec{y}_1, \hdots, \vec{y}_{t-1} \} \in \mc H_{t,v}(\vec{y})$ and $\{ (\vec{w}_{1,1},\hdots,\vec{w}_{v,1}), \hdots, (\vec{w}_{1,t-1},\hdots,\vec{w}_{v,t-1}) \} \in \mc H_{t-1,v}^{\otimes v}$ gives the desired isomorphism between $\mc H_{t,v}(\vec{y})$ and $\mc H_{t-1,v}^{\otimes v}$.
\end{proof}

The final lemma solves an optimisation problem that shall appear in our proof of Proposition~\ref{upperbound}.

\begin{lemma}\label{maxima}
For $K \geq t\geq 2$, let $f(x_1,\hdots,x_K)=\sum_{i=1}^K x_i(1-x_i)^{t-1}$. The maximum of $f$, subject to $x_i \geq 0$ for every $1\leq i\leq K$ and $\sum_i x_i = 1$, is $\left(1-\frac{1}{K}\right)^{t-1}$.
\end{lemma}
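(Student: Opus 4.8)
The plan is to replace the constrained maximisation by a single-variable ``tangent line'' inequality. Write $g(x) = x(1-x)^{t-1}$, so $f(x_1, \ldots, x_K) = \sum_{i=1}^K g(x_i)$, and observe that the constraint $\sum_i x_i = 1$ forces the $x_i$ to have average $1/K$. I will take $L$ to be the tangent line to $g$ at the point $1/K$, namely $L(x) = g(1/K) + g'(1/K)(x - 1/K)$, and show that $g(x) \le L(x)$ for all $x \in [0,1]$. Once that is known, summing over $i$ and using $\sum_i (x_i - 1/K) = 0$ yields
\[
f(x_1,\ldots,x_K) = \sum_{i=1}^K g(x_i) \;\le\; \sum_{i=1}^K L(x_i) \;=\; K\,g(1/K) \;=\; \Bigl(1 - \tfrac1K\Bigr)^{t-1},
\]
while equality holds when $x_1 = \cdots = x_K = 1/K$; hence the maximum is exactly $(1-1/K)^{t-1}$.

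To prove the global bound $g \le L$ on $[0,1]$, I would compute $g'(x) = (1-x)^{t-2}(1-tx)$ and $g''(x) = (t-1)(1-x)^{t-3}(tx-2)$, which show that $g$ is concave on $[0, 2/t]$ and convex on $[2/t,1]$. Since $K \ge t \ge 2$, the tangency point satisfies $1/K \le 1/t < 2/t$, so it lies strictly inside the concave region; therefore $g \le L$ throughout $[0, 2/t]$. On the remaining interval $[2/t, 1]$ the difference $g - L$ is convex, hence attains its maximum over that interval at an endpoint: the value at $x = 2/t$ is nonpositive by the preceding case, and the value at $x = 1$ equals $-L(1) = -\bigl(g(1/K) + g'(1/K)(1 - 1/K)\bigr)$, which is $\le 0$ because $g(1/K) \ge 0$ and $g'(1/K) = (1-1/K)^{t-2}(1-t/K) \ge 0$ (here the hypothesis $K \ge t$ is used a second time). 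Thus $g - L \le 0$ on $[2/t,1]$ as well, and so $g \le L$ on all of $[0,1]$.

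The only place where a little extra care is needed is the boundary case $t = 2$, for which the exponent $t - 3$ in the formula for $g''$ is negative; but then $g(x) = x(1-x)$ is concave on all of $[0,1]$, so the tangent-line bound is immediate and the same summation finishes. I expect the one genuinely non-automatic step to be the verification that the tangent line at $1/K$ remains above $g$ even beyond the inflection point $2/t$ — everything else is either a routine derivative computation or a direct consequence of $\sum_i x_i = 1$ together with $K \ge t$.
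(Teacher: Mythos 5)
Your proof is correct, and it takes a genuinely different route from the paper's. The paper first invokes compactness to guarantee a maximiser exists, then uses a perturbation argument (shifting mass between a coordinate above $1/t$ and one below) to show that at the maximum every $x_i$ lies in $[0,1/t]$, where $g$ is concave, and finishes with Jensen's inequality. You instead prove the stronger pointwise tangent-line inequality $g(x) \le L(x)$ on the entire interval $[0,1]$, handling the convex region $[2/t,1]$ separately by the endpoint argument for the convex difference $g - L$ (using $K \ge t$ both to place $1/K$ in the concave region and to ensure $g'(1/K) \ge 0$, whence $L(1) \ge 0$). The tangent-line approach is somewhat cleaner: it gives the bound for \emph{every} feasible point directly, with no appeal to existence of a maximiser and no perturbation lemma, at the cost of one extra case analysis past the inflection point. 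Both arguments ultimately exploit the same concave/convex structure of $g$; the paper's Jensen step is, in effect, the restriction of your tangent-line inequality to $[0,1/t]$. Your handling of $t=2$, where the exponent $t-3$ in $g''$ is negative but $g$ is concave on all of $[0,1]$, is also correct and worth the explicit remark.
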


\begin{proof}
Note that $f$ is a continuous function and the constraints define a compact set, so the maximum is well defined.  By taking $x_i = \frac{1}{K}$ for all $i$, we find $f \left( \frac{1}{K}, \hdots, \frac{1}{K} \right) = \left(1 - \frac{1}{K} \right)^{t-1}$, and so we only need to prove the upper bound.

Set $g(x)=x(1-x)^{t-1}$, and observe that 
\[g'(x)=(1 - tx) (1-x)^{t-2} \quad \textrm{and} \quad g''(x)=(tx - 2)(t-1)(1-x)^{t-3}.\]
Hence $g(x)$ is monotone increasing on $[0,\frac{1}{t}]$, monotone decreasing on $[\frac{1}{t},1]$, concave on $[0,\frac{2}{t}]$ and convex on $[\frac{2}{t},1]$.

Now let $(x_1,\hdots,x_K)$ maximise $f$, and suppose there was some index $i_0$ such that $x_{i_0} > \frac{1}{t}$.  Since $K \ge t$ and $\sum_i x_i = 1$, the average of the weights $x_i$ is at most $\frac{1}{t}$, and hence there must be some index $j_0$ such that $x_{j_0} < \frac{1}{t}$.  Put $\eps=\min \left\{\frac{1}{t}-x_{j_0},x_{i_0}-\frac{1}{t}\right\} > 0$.  If we replace $x_{j_0}$ by $x_{j_0}+\eps$ and $x_{i_0}$ by $x_{i_0}-\eps$, the monotonicity properties of $g$ imply the value of $f$ would increase, contradicting the fact that $(x_1,\hdots,x_K)$ maximises $f$.

Hence $x_i \in \left[0, \frac{1}{t} \right]$ for $1 \le i \le K$.  As $g$ is concave on this interval, Jensen's inequality gives
\[f(x_1,\hdots,x_K)=\sum_{i=1}^K x_i(1-x_i)^{t-1}= \sum_{i=1}^K g(x_i) \le K g\left( \frac{1}{K} \sum_{i=1}^K x_i \right) =\left(1-\frac{1}{K}\right)^{t-1}. \qedhere\]
\end{proof}

We are now in position to bound the Lagrangian of the hypergraph $\mc H_{t,v}$, thereby completing the proof of the upper bound from Theorem~\ref{main}.

\begin{proof}[Proof of Proposition~\ref{upperbound}]

For fixed $v$, we shall prove $\lambda(\mc H_{t,v}) \le \frac{c_{t,v}}{t!}$ by induction on $t$.
                                                                       
For the base case $t=1$, the hypergraph $\mc H_{v,1}$ is very simple. We have $v^v$ vertices corresponding to the vectors in $[v]^v$.  The edges of $\mc H_{v,1}$ are the singletons corresponding to vectors containing every $a\in [v]$. We thus have $v!$ edges, and the weight polynomial is simply the sum of the weights of the corresponding $v!$ vertices, whose maximum value is trivially at most $1$, which is equal to $\frac{c_{1,v}}{1!}$.

For the induction step, suppose $\lambda(\mc H_{t-1,v}) \le \frac{c_{t-1,v}}{(t-1)!}$ and consider $\mc H_{t,v}$.  Let $x$ be an optimal weighting of $\mc H_{t,v}$ with minimal support.  Suppose $\vec{y}_1, \hdots, \vec{y}_K$ are the vertices of $\mc H_{t,v}$ with non-zero weight, and let $x_1, \hdots, x_K$ represent their respective weights.  By Lemma~\ref{nonzero weights}, $K \le \frac{v^t - 1}{v-1}$.

Using Lemma~\ref{Lagrange link}, we find
\[ \lambda( \mc H_{t,v} ) = w(x, \mc H_{t,v}) \le \frac{1}{t} \sum_{\vec{y} \in [v]^{v^t}} x(\vec{y}) (1 - x(\vec{y}))^{t-1} \lambda( \mc H_{t,v}(\vec{y})) = \frac{1}{t} \sum_{i=1}^K x_i (1 - x_i)^{t-1} \lambda( \mc H_{t,v}(\vec{y}_i)). \]

By Lemma~\ref{linkprod}, $\mc H_{t,v}(\vec{y}_i) \cong \mc H_{t-1,v}^{\otimes v}$ for all $i$, and hence Lemma~\ref{Lagrange tensor} and the induction hypothesis give $\lambda( \mc H_{t,v}(\vec{y}_i)) = \lambda( \mc H_{t-1,v}) \le \frac{c_{t-1,v}}{(t-1)!}$ for all $i$.  Thus
\[ \lambda( \mc H_{t,v} ) \le \frac{c_{t-1,v}}{t!} \sum_{i=1}^K x_i (1 - x_i)^{t-1}. \]

Since the support must span at least one edge of $\mc H_{t,v}$, we have $K \ge t \ge 2$.  Moreover, since $x$ is a legal weighting, $x_i \ge 0$ for all $i$, and $\sum_i x_i = 1$.  Hence we may apply Lemma~\ref{maxima} to deduce that $\lambda(\mc H_{t,v}) \le \frac{c_{t-1,v}}{t!} \left( 1 - \frac{1}{K} \right)^{t-1}$.  As $K \le \frac{v^t - 1}{v-1}$, this can be further bounded by $\lambda( \mc H_{t,v} ) \le \frac{c_{t-1,v}}{t!} \left( 1 - \frac{v-1}{v^t - 1} \right)^{t-1}$.  Substituting in the definition of $c_{t-1,v}$, we obtain
\begin{align*}
	\lambda( \mc H_{t,v} ) &\le \frac{c_{t-1,v}}{t!} \left(1 - \frac{v-1}{v^t - 1} \right)^{t-1} = \frac{1}{t!} \left(\prod_{i=0}^{t-2} \frac{v^{t-1}-v^i}{v^{t-1} - 1} \right) \left( 1 - \frac{v-1}{v^t - 1} \right)^{t-1} \\
	&= \frac{1}{t!} \left(\prod_{i=0}^{t-2} \frac{v^{t-1}-v^i}{v^{t-1} - 1} \right)\left( \frac{v^t - v}{v^t - 1} \right)^{t-1}= \frac{1}{t!} \prod_{i=0}^{t-2} \frac{\left(v^{t-1}-v^i\right) \left(v^t - v \right)}{\left( v^{t-1} - 1\right) \left(v^t - 1 \right)} \\
	&= \frac{1}{t!} \prod_{i=0}^{t-2} \frac{v^{t}-v^{i+1}}{v^{t} - 1} =\frac{1}{t!} \prod_{i=1}^{t-1} \frac{v^{t}-v^i}{v^{t} - 1} = \frac{1}{t!} \prod_{i=0}^{t-1} \frac{v^t - v^i}{v^t - 1} = \frac{c_{t,v}}{t!},
\end{align*}
completing the proof.
\end{proof}

\section{An optimal construction} \label{sec:lowerbound}

In this section we prove the lower bounds of Theorem~\ref{main} by providing, for every prime power $v$, a linear algebraic construction of an array of size $v^t\times k$ that covers a large number of $t$-sets.

We begin by handling the case $k=\frac{v^t-1}{v-1}$. Let $\mathbb{F}_v$ be the $v$-element field, and consider the vector space $\mathbb{F}_v^t$.  Note that there are exactly $\frac{v^t - 1}{v-1}$ $1$-dimensional subspaces $L_1, \hdots, L_k$, and for each such subspace $L_i$, fix some non-zero vector $\vec{z_i} \in L_i \le \mathbb{F}_v^t$.  Now let $A_{\text{opt}}$ be a $v^t\times \frac{v^t-1}{v-1}$ array whose rows are indexed by the $v^t$ vectors in $\mathbb{F}_v^t$ and whose columns are indexed by $[k]$.  Given $\vec{y} \in \mathbb{F}_v^t$ and $i \in [k]$, we define the $(\vec{y},i)$ entry of $A_{\text{opt}}$ to be the scalar product $\vec{y}\cdot\vec{z}_i$.

We claim that a $t$-set $Q \subseteq [k]$ is covered by $A_{\text{opt}}$ if and only if the corresponding vectors $\{ \vec{z}_i : i \in Q \}$ are linearly independent. To see this, let $M$ be the $t\times t$ matrix containing the vectors $\{ \vec{z}_i : i \in Q \}$ as columns. For any $\vec{y} \in \mathbb{F}_v^t$, the corresponding row of $(A_{\text{opt}})_Q$ is $\vec{y}^T M$. Now the set $Q$ is covered by $A_{\text{opt}}$ if every possible vector occurs in this way, or, equivalently, if any vector from $\mathbb{F}_v^t$ can be obtained as $\vec{y}^T M$ for some appropriate vector $\vec{y}\in \mathbb{F}_v^t$. This happens precisely when the matrix $M$ is invertible, i.e. when the column vectors are linearly independent.

Now how many linearly independent sets $\{ \vec{z}_{j_1}, \hdots, \vec{z}_{j_t} \}$ are there?  If we have already chosen $i \ge 0$ linearly independent vectors $\vec{z}_{j_1},\hdots,\vec{z}_{j_i}$, the next vector $\vec{z}_{j_{i+1}}$, and hence the corresponding subspace $L_{j_{i+1}}$ it belongs to, cannot be in the $i$-dimensional subspace spanned by $\{\vec{z}_{j_1}, \hdots, \vec{z}_{j_i} \}$.  This forbids $\frac{v^i - 1}{v-1}$ of the possible vectors, leaving $\frac{v^t-1}{v-1}-\frac{v^i-1}{v-1}$ choices for $\vec{z}_{j_{i+1}}$. As the order in which the vectors are chosen does not matter, this gives a total of 
\[ \frac{1}{t!}\prod_{i=0}^{t-1}\left(\frac{v^t-1}{v-1}-\frac{v^i-1}{v-1}\right)=\frac{1}{t!}\prod_{i=0}^{t-1}\frac{v^t-v^i}{v-1}=\left(\frac{v^t-1}{v-1}\right)^{t}\frac{c_{t,v}}{t!} =c_{t,v} \frac{k^t}{t!}\]
different linearly independent sets of size $t$, and hence this is also the number of $t$-sets covered by $A_{\text{opt}}$. Note that this exactly matches the upper bound from Section~\ref{sec:upperbound}, which implies that in Proposition~\ref{upperbound}, we in fact determine the Lagrangian of the hypergraph $\mc H_{t,v}$ precisely for all prime powers $v$.

If $k$ is divisible by $\frac{v^t-1}{v-1}$, then we can take a blow-up of this linear algebraic construction, similar to the block construction in Section~\ref{sec:intro}. Partition the set $[k]$ of column indices into $\frac{v^t-1}{v - 1}$ parts of equal size, so that we have parts $\left\{ B_i : i \in \left[ \frac{v^t - 1}{v - 1} \right] \right\}$ with $\card{B_i} =\frac{k(v-1)}{v^t - 1}$ for all $i$.  We can now define the blown-up array $A_{\mathrm{opt}, \mathrm{block}}$ of size $v^t\times k$, where for $j \in B_i$, the $j$th column of $A_{\mathrm{opt}, \mathrm{block}}$ is the $i$th column of $A_{\mathrm{opt}}$.

It is easy to see that a set $Q \subseteq [k]$ is covered by $A_{\mathrm{opt}, \mathrm{block}}$ if and only if it contains at most one element from each block and the corresponding set of block indices is covered by $A_{\mathrm{opt}}$.  Thus
\[ \Cov (A_{\mathrm{opt}, \mathrm{block}}) = \bigcup_{Q' \in \Cov ( A_{\mathrm{opt}} )} \; \prod_{i \in Q'} B_i. \]

Restricting to sets of size $t$, we find
\begin{align*}
\covt( A_{\mathrm{opt}, \mathrm{block}} ) &= \sum_{\{ j_1, \hdots, j_t \} \in \Cov( A_{\mathrm{opt}} ) } \prod_{i=1}^t \card{B_{j_i} } \\
&=  \sum_{\{ j_1, \hdots, j_t \} \in \Cov( A_{\mathrm{opt}} ) } \left(\frac{k(v-1)}{v^t - 1} \right)^t = \left(\frac{k(v-1)}{v^t - 1}\right)^t \covt( A_{\mathrm{opt}} ) \\
&= \left(\frac{k(v-1)}{v^t - 1}\right)^{t} \left(\frac{v^t-1}{v-1}\right)^{t}\frac{c_{t,v}}{t!}=c_{t,v} \frac{k^t}{t!},
\end{align*}
showing we do have equality in Theorem~\ref{main} for this case.

For general $k$, as noted by Alon (personal communication), one can take a random partition of the set $[k]$ of column indices into the $\frac{v^t-1}{v-1}$ parts $\left\{ B_i : i \in \left[\frac{v^t - 1}{v-1} \right] \right\}$.  More precisely, for each $j \in [k]$ choose some $i \in \left[ \frac{v^t-1}{v-1} \right]$ independently and uniformly at random, and add $j$ to $B_i$.  We can now take the corresponding block construction $A$ based on $A_{\text{opt}}$, where for each $j \in B_i$, the $j$th column of $A$ is the $i$th column of $A_{\mathrm{opt}}$.  We again have that a set $Q$ is covered by $A$ if and only if it contains at most one element from each block and the corresponding set of block indices is covered by $A_{\mathrm{opt}}$. Accordingly, for a fixed $t$-set $Q$,
\[\mathbb{P}(Q \in \Cov(A))=\frac{t!\covt(A_{\text{opt}})}{\left(\frac{v^t-1}{v-1}\right)^t}=c_{t,v},\]
and so by linearity of expectation the expected number of $t$-sets covered by $A$ is $c_{t,v}\binom{k}{t}$. Hence there must be some block partition for which the number of covered $t$-sets is at least $c_{t,v}\binom{k}{t}$.  This concludes the proof of Theorem~\ref{main}.

To close this section, we note that in the special case $v = 2$, the columns appearing in $A_{\mathrm{opt}}$ are, up to an affine translation, the non-constant columns of the $2^t \times 2^t$ Hadamard matrix constructed by Sylvester~\cite{Sylvester}.  While this may not be immediately apparent from the recursive definition of these matrices, it follows from an equivalent formulation given in~\cite{Encycl}.  Here, the rows and columns of the matrix are indexed by vectors from $\mathbb{F}_2^t$, and the entry in the row corresponding to $\vec{y}$ and the column corresponding to $\vec{z}$ is $(-1)^{\vec{y} \cdot \vec{z}}$.

\section{Small covering arrays} \label{sec:covering}

In this section we shall build small covering arrays, thereby proving Corollary~\ref{cor:CAN}.  Roughly speaking, the idea is to combine many random copies of the construction from Section~\ref{sec:lowerbound} into a single array.  Since a $t$-set $Q$ is covered with large probability by each individual copy of the small construction, it follows that the probability that $Q$ is not covered by any of the copies will be exponentially small.  In order to show that we obtain a covering array, we need to prove that with positive probability, all of the $\binom{k}{t}$ $t$-sets of columns are covered.  This will follow from the Lov\'asz Local Lemma, first proven by Erd\H{o}s and Lov\'asz~\cite{EL75} and subsequently sharpened by Spencer~\cite{Spe77}.

\begin{thm}[Lov\'asz Local Lemma, 1975] \label{thm:LLL}
Let $\mc E_1, \mc E_2, \hdots, \mc E_m$ be events in some probability space.  Suppose there are $p \in [0,1]$ and $d \in \mathbb{N}$ such that for each $i \in [m]$, $\mathbb{P}(\mc E_i) \le p$, and the event $\mc E_i$ is mutually independent of a set of all but at most $d$ of the other events.  If $ep(d+1) \le 1$, then $\mathbb{P}\left( \cap_{i=1}^m \overline{\mc E_i} \right) > 0$.
\end{thm}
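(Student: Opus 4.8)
The plan is to deduce this symmetric statement from the general (asymmetric) form of the Local Lemma, which I would establish first. For each $i \in [m]$, fix an index set $N(i) \subseteq [m] \setminus \{i\}$ with $|N(i)| \le d$ such that $\mc E_i$ is mutually independent of $\{ \mc E_j : j \notin N(i) \cup \{i\} \}$; such a set exists by hypothesis. The asymmetric lemma I would prove states: if one can choose reals $x_i \in [0,1)$ with $\mathbb{P}(\mc E_i) \le x_i \prod_{j \in N(i)} (1 - x_j)$ for every $i \in [m]$, then
\[ \mathbb{P}\left( \bigcap_{i=1}^m \overline{\mc E_i} \right) \ge \prod_{i=1}^m (1 - x_i) > 0 . \]
The symmetric version then follows by taking $x_i = \tfrac{1}{d+1}$ for all $i$: since $|N(i)| \le d$ and $1 - x_j < 1$, we get $\prod_{j \in N(i)}(1-x_j) \ge \left(1 - \tfrac{1}{d+1}\right)^d \ge e^{-1}$, using the elementary inequality $\left(1 - \tfrac1n\right)^{n-1} \ge e^{-1}$ with $n = d + 1$; hence $x_i \prod_{j \in N(i)}(1-x_j) \ge \tfrac{1}{e(d+1)} \ge p \ge \mathbb{P}(\mc E_i)$, where the middle inequality is just the rearrangement of $ep(d+1) \le 1$. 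The conclusion $\mathbb{P}\left( \bigcap_i \overline{\mc E_i} \right) \ge \left(1 - \tfrac{1}{d+1}\right)^m > 0$ is then exactly what is claimed.

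The core of the argument is the following claim, which I would prove by induction on $|T|$: for every $i \in [m]$ and every $T \subseteq [m] \setminus \{i\}$ with $\mathbb{P}\left( \bigcap_{j \in T} \overline{\mc E_j} \right) > 0$, one has $\mathbb{P}\left( \mc E_i \mid \bigcap_{j \in T} \overline{\mc E_j} \right) \le x_i$. The base case $T = \emptyset$ is immediate, since $\mathbb{P}(\mc E_i) \le x_i \prod_{j \in N(i)}(1-x_j) \le x_i$. For the inductive step I would split $T = T_1 \cup T_2$ with $T_1 = T \cap N(i)$ and $T_2 = T \setminus N(i)$, and write
\[ \mathbb{P}\left( \mc E_i \mid \bigcap_{j \in T} \overline{\mc E_j} \right) = \frac{ \mathbb{P}\left( \mc E_i \cap \bigcap_{j \in T_1} \overline{\mc E_j} \mid \bigcap_{j \in T_2} \overline{\mc E_j} \right) }{ \mathbb{P}\left( \bigcap_{j \in T_1} \overline{\mc E_j} \mid \bigcap_{j \in T_2} \overline{\mc E_j} \right) } . \]
For the numerator I would drop the intersection over $T_1$ and use that $\mc E_i$ is independent of the events indexed by $T_2$ (as $T_2 \cap N(i) = \emptyset$), bounding it above by $\mathbb{P}(\mc E_i) \le x_i \prod_{j \in N(i)}(1-x_j)$. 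For the denominator, enumerating $T_1 = \{ j_1, \dots, j_r \}$ and expanding as a telescoping product turns it into $\prod_{\ell=1}^{r} \left( 1 - \mathbb{P}\left( \mc E_{j_\ell} \mid \bigcap_{s < \ell} \overline{\mc E_{j_s}} \cap \bigcap_{j \in T_2} \overline{\mc E_j} \right) \right)$; since each conditioning set here has size strictly smaller than $|T|$ (and contains $\bigcap_{j \in T}\overline{\mc E_j}$, hence has positive probability), the inductive hypothesis bounds each conditional probability by $x_{j_\ell}$, so the denominator is at least $\prod_{\ell=1}^{r}(1 - x_{j_\ell}) \ge \prod_{j \in N(i)}(1-x_j)$, using $T_1 \subseteq N(i)$. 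Dividing, the factor $\prod_{j \in N(i)}(1-x_j)$ cancels and we are left with the bound $x_i$.

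Granting the claim, the asymmetric lemma follows from one further telescoping expansion, using $\mathbb{P}\left( \bigcap_{j < i} \overline{\mc E_j} \right) > 0$ for all $i$ (itself an easy induction from the claim):
\[ \mathbb{P}\left( \bigcap_{i=1}^m \overline{\mc E_i} \right) = \prod_{i=1}^m \left( 1 - \mathbb{P}\left( \mc E_i \mid \bigcap_{j < i} \overline{\mc E_j} \right) \right) \ge \prod_{i=1}^m (1 - x_i) > 0 . \]
I expect the main obstacle to be the bookkeeping in the inductive step of the claim: correctly splitting the conditioning, verifying that the sets appearing after the telescoping in the denominator are genuinely smaller than $T$ so that the induction applies, and checking throughout that every conditioning event has positive probability so that the conditional probabilities are well defined. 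The genuinely probabilistic input --- the independence of $\mc E_i$ from events outside $N(i)$ --- is used exactly once, in the numerator bound; everything else is elementary manipulation of conditional probabilities together with the inequality $\left(1 - \tfrac1n\right)^{n-1} \ge e^{-1}$.
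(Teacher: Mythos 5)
The paper does not prove Theorem~\ref{thm:LLL}; it is invoked as a known result, with citations to Erd\H{o}s--Lov\'asz~\cite{EL75} (for the original) and Spencer~\cite{Spe77} (for the sharpened constant), so there is no in-paper proof to compare against. Your argument is the standard textbook proof: deduce the symmetric form from the asymmetric (Spencer/Shearer-style) form by setting $x_i = \frac{1}{d+1}$ and using $\left(1-\frac{1}{d+1}\right)^d \ge e^{-1}$, and prove the asymmetric form by inducting on $|T|$ on the claim $\mathbb{P}\bigl(\mc E_i \mid \bigcap_{j\in T}\overline{\mc E_j}\bigr) \le x_i$, splitting $T$ into $T_1 = T\cap N(i)$ and $T_2 = T\setminus N(i)$, bounding the numerator via mutual independence from the $T_2$-events and the denominator via a telescoping product with the inductive hypothesis. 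The bookkeeping you flag as the main concern is handled correctly: each conditioning set in the telescoping is a proper subset of $T$, so the induction applies, and each contains $\bigcap_{j\in T}\overline{\mc E_j}$, so positivity is preserved. One small edge case worth a sentence in a fully written version: when $d=0$ your choice $x_i = \frac{1}{d+1}=1$ leaves the allowed range $[0,1)$, but that case is degenerate (all events mutually independent, each with probability at most $p \le 1/e < 1$) and the conclusion is immediate. Otherwise the proof is complete and correct.
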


We can now proceed with the proof of Corollary~\ref{cor:CAN}.

\begin{proof}[Proof of Corollary~\ref{cor:CAN}]
We first handle the prime power case.  Suppose we have integers $k \ge t \ge 2$, and let $v \ge 2$ be a prime power.  We wish to build an $(rv^t; t, k, v)$-covering array $A$, where $r = \ceil{ \frac{(t-1) \lg k + \lg (et)}{\lg \frac{1}{1 - c_{t,v}}}}$.

For $1 \le \ell \le r$, let $A^{(\ell)}$ be an independent copy of our construction from Section~\ref{sec:lowerbound}.  That is, for each $j \in [k]$, let $i_{\ell, j} \in \left[ \frac{v^t -1 }{v-1} \right]$ be chosen independently and uniformly at random, and take the $j$th column of $A^{(\ell)}$ to be $(A_{\mathrm{opt}})_{\{i_{\ell,j}\}}$.   We then take $A$ to be the concatenation of $A^{(1)}, A^{(2)}, \hdots, A^{(r)}$, giving an $r v^t \times k$ array.  We wish to show that with positive probability $A$ is an $(rv^t; t, k, v)$-covering array.

Hence, for each $t$-subset $Q$ of the columns, we let $\mc E_Q$ be the event that $Q$ is \emph{not} covered by $A$.  Since the array $A$ contains each array $A^{(\ell)}$, $\ell \in [r]$, if $Q$ is not covered by $A$, then it is not covered by any $A^{(\ell)}$.  From Section~\ref{sec:lowerbound}, we have $\mathbb{P}\left(Q \notin \Cov(A^{(\ell)})\right) = 1 - c_{t,v}$, and by construction these subarrays are independent of one another.  We may therefore define $p$ by
\[ \mathbb{P}(\mc E_Q) = \mathbb{P}(Q \notin \Cov(A)) \le \mathbb{P}\left( \cap_{\ell \in [r]} \left\{ Q \notin \Cov\left(A^{(\ell)}\right) \right\} \right) = (1 - c_{t,v})^r = p. \]

Our construction of the array $A$ also ensures that the different columns are independent of one another.  In particular, this implies that the event $\mc E_Q$ is mutually independent of the set of all events that depend on a disjoint set of columns, i.e. $\{ \mc E_{Q'}: Q' \cap Q = \emptyset \}$.  For a fixed set $Q$, if $Q'$ intersects $Q$, it contains one of the $t$ columns of $Q$, and then there are fewer than $\binom{k}{t-1}$ choices for the remaining columns of $Q'$.  Hence we may take $d < t \binom{k}{t-1}$.

We thus have
\[ ep(d+1) \le et \binom{k}{t-1} (1 - c_{t,v})^r \le et k^{t-1}(1 - c_{t,v})^r \le 1,\]
where the final inequality follows from our choice of $r$.  Hence, by Theorem~\ref{thm:LLL}, with positive probability none of the events $\mc E_Q$ occur, which implies the existence of some such $rv^t \times k$ array $A$ that covers all of its $\binom{k}{t}$ $t$-subsets of columns.  This gives the claimed bound,
\[ \CAN(t,k,v) \le r v^t = \ceil{ \frac{(t-1)\lg k + \lg (et)}{\lg \frac{1}{1 - c_{t,v}}}} v^t. \]

For the next bound, we divide the above expression by $\lg k$ and take the limit as $k$ tends to infinity, giving
\[ d(t,v) \le \frac{(t-1)v^t}{\lg \frac{1}{1 - c_{t,v}}} \le \frac{(t-1)v^t}{2 \lg v - \lg (v+1)}, \]
where the second inequality follows from the bound $c_{t,v} \ge 1 - \frac{v+1}{v^2}$, derived below the statement of Theorem~\ref{main}.

Finally, we turn to the case when $v$ is not a prime power.  Here we use the trivial observation that for any $v \le v'$, $\CAN(t, k, v) \le \CAN(t, k, v')$, since projecting an array from a large set of symbols to a smaller set cannot cause any subset of columns to become uncovered.  In particular, it follows that $d(t, v) \le d(t,v')$.

Given $v$, let $q$ be the smallest prime power that is at least $v$.  Baker, Harman and Pintz~\cite{BHP01} proved that, provided $v$ is sufficiently large, $v \le q \le v + v^{0.526}$.  We thus have
\[ d(t,v) \le d(t,q) \le \frac{(t-1)q^t}{2 \lg q - \lg (q+1)} \le \frac{ (t-1) (1 + v^{-0.474})^t v^t}{ 2 \lg v - \lg (v+1)} \le \frac{ (t-1) e^{tv^{-0.474}} v^t}{2 \lg v - \lg(v + 1)},\]
as required.
\end{proof}

One may contrast our methods with those that have been used in previous constructions.  In~\cite{SC16} and~\cite{SCDV16}, small covering arrays were built by algebraically extending a random array.  Here, we do the opposite, taking random copies of a small linear algebraic array.  Since our initial array was exceedingly efficient in covering $t$-sets, we were able to reduce the number of random copies required, thus resulting in a smaller construction.  Indeed, the upper bound of Section~\ref{sec:upperbound} shows that this is the best $v^t \times k$ array one can start with.  However, to further improve the upper bound, one could perhaps find better ways to combine these copies, or perhaps start with a larger structured construction.

\section{Concluding remarks}\label{sec:conclusion}

In this paper we combined linear algebraic and probabilistic arguments to construct small covering arrays, asymptotically improving the upper bounds on $\CAN(t,k,v)$ by a factor of $\ln v$.  This involved the study of the extremal function $\covm(N; t,k,v)$, and we showed that at the lower threshold $N = v^t$ (the minimum size of an array that permits covering a single $t$-set) one can already cover a large proportion of all $t$-sets.  We close with some final remarks and possible directions for further research.

\paragraph{Almost-covering arrays}

As mentioned in the introduction, Sarkar, Colbourn, De Bonis and Vaccaro~\cite{SCDV16} studied (in greater generality) almost-covering arrays.  Given $N$ rows, $k$ columns, a strength $t$, a set $S$ of $v$ symbols, and a coverage fraction $\eps \in [0,1]$, an array $A \in S^{N \times k}$ is an $(N;t,k,v,\eps)$-almost-covering array if $A$ covers all but at most $\eps \binom{k}{t}$ $t$-subsets $Q$.  This relaxes the concept of a covering array, and one can again define an extremal function $\ACAN(t, k, v, \eps)$, which is the smallest $N$ for which an $(N; t,k,v,\eps)$-almost-covering array exists.

This is the inverse function of the $\covm(N; t,k,v)$ function; rather than fixing the size of the array and maximising the number of $t$-sets covered, we fix the number of sets to be covered, and minimise the size of the array needed.  In~\cite{SCDV16} it is proven that almost-covering arrays can be significantly smaller than covering arrays; indeed, the number of rows need not grow with $k$.  More precisely, they showed $\ACAN(t,k,v,\eps) \le v^t \ln \left( \frac{v^{t-1}}{\eps} \right)$, and that when $v$ is a prime power, the bound can be improved to $\ACAN(t,k,v,\eps) \le v^t \ln \left( \frac{2v^{t-2}}{\eps} \right) + v$.

As in Section~\ref{sec:covering}, we can improve the bounds by a $\lg v$ factor by concatenating random copies of the linear algebraic array $A_{\mathrm{opt}}$ until the expected number of uncovered sets is at most $\eps \binom{k}{t}$.  For prime power $v$, this gives $\ACAN(t,k,v,\eps) \le v^t \ceil{ \frac{\ln \frac{1}{\eps}}{2 \ln v - \ln (v + 1)}}$.  In particular, $A_{opt}$ itself shows $\ACAN(t,k,v,\eps) = v^t$ for $\eps \ge \frac{v+1}{v^2}$.  These results can again be extended to all large values of $v$ by replacing $v$ with the next prime power, and this gives effective bounds whenever $t = o(v^{0.474})$.

\paragraph{Explicit constructions of covering arrays}

To improve the bound on $\CAN(t,k,v)$, we consider a random construction and apply the Lov\'asz Local Lemma, making our result an existential one.  However, we hope that one could apply the algorithmic version of the Local Lemma, due to Moser and Tardos~\cite{MT10}, in our setting, which would perhaps result in an efficient Las Vegas algorithm to produce small covering arrays.  For a simpler analysis, one could take a slightly larger number of random copies of $A_{\mathrm{opt}}$, which would then give a covering array with high probability, resulting in an efficient Monte Carlo algorithm instead.

Regardless, in light of the many important applications of covering arrays, a great deal of interest lies in the explicit construction of efficient covering arrays.  In the binary setting, with $v = 2$, where randomised constructions give arrays of size $\mc O \left( t 2^t \lg k \right)$, Alon~\cite{Alon} gave an explicit construction of size $2^{\mc O ( t^4)} \lg k$.  Following a sequence of incremental improvements, Naor, Schulman and Srinivasan~\cite{NSchS} provided near-optimal arrays of size $t^{\mc O(\lg t)} 2^t \lg k$.

Given the inherent symmetry of the linear algebraic array $A_{\mathrm{opt}}$, it might be possible to deterministically concatenate copies of this array to form a small explicitly-constructed covering array.  Indeed, this might even give better upper bounds on $\CAN(t,k,v)$ than those we obtained in Section~\ref{sec:covering} using probabilistic means.  Taking this approach, the minimum number of copies of $A_{\mathrm{opt}}$ needed is given by the following hashing problem with an algebraic twist.

\begin{ques} \label{ques:hashing}
Given $k \ge t \ge 2$ and a prime power $v \ge 2$, what is the minimum $r$ such that there is a collection of $k$ sequences in $(\mathbb{F}_v^t)^r$ (that is, the sequences have length $r$, and each entry is a vector in $\mathbb{F}_v^t$) with the property that for every $t$-subset of the sequences, there is some coordinate where the $t$ corresponding vectors are linearly independent?
\end{ques}

In the standard hashing problem, one would only require the existence of a coordinate where the sequences were pairwise-distinct, whereas here we impose a stronger algebraic condition.  Given that the best-known bounds for the hashing problem come from probabilistic constructions (see~\cite{FK84}), it might be difficult to improve on the bounds from Section~\ref{sec:covering} this way.  However, deterministic solutions to Question~\ref{ques:hashing} would result in explicit constructions of (hopefully) small covering arrays.

\paragraph{Evolution of $\covm(N; t, k, v)$}

Another way to improve the construction in Section~\ref{sec:covering} would be to replace $A_{\mathrm{opt}}$, and instead concatenate random copies of some other array.  The upper bound proven in Section~\ref{sec:upperbound} shows that we cannot hope to do better with arrays of size $v^t$, but it might be beneficial to consider initial arrays with a larger number of rows.  With this in mind, it would be of interest to determine how the function $\covm(N; t,k,v)$ grows as $N$ increases.  In particular, at the other extreme we observe that $\CAN(t,k,v) = \min \left\{ N : \covm(N; t,k,v) = \binom{k}{t} \right\}$, and so complete knowledge of $\covm(N; t,k,v)$ would solve the covering array problem as well.

On a much finer scale, what happens for $N = v^t + s$ for small values of $s$?  In $A_{\mathrm{opt}}$, if a $t$-subset $Q$ of columns is not covered, then the vectors $\vec{z}_i \in \mathbb{F}_v^t$ that the columns are mapped to form a matrix of rank at most $t-1$, and hence at most $v^{t-1}$ rows appear in the subarray $(A_{\mathrm{opt}})_Q$.  This implies that to increase the number of covered $t$-subsets, we need to add at least $v^t - v^{t-1}$ new rows, almost doubling the size of $A_{\mathrm{opt}}$.

Of course, there could be other arrays of size $v^t + s$ that do not contain $A_{\mathrm{opt}}$ as a subarray, but cover a larger number of $t$-sets.  Our proof of the optimality of $A_{\mathrm{opt}}$ when $N = v^t$ relied heavily on the fact that if a $t$-set is covered, then each sequence in $S^t$ appears exactly once as a row in the corresponding subarray, leading to useful linear algebraic interpretations of coverage.  This rigid structure is lost for larger arrays, rendering the analysis more difficult.  Still, we feel that without this structure, one should not be able to cover a larger number of $t$-sets.  To make this intuition more precise, we offer the following question.

\begin{ques} \label{ques:plusone}
When $v$ is a prime power, are $\covm(v^t + 1; t,k,v)$ and $\covm(v^t; t,k,v)$ equal?  What is the smallest $s$ for which $\covm(v^t + s; t, k, v) > \covm(v^t; t,k,v)$?
\end{ques}

\paragraph{Non-uniform coverage}

Finally, when dealing with covering arrays, we have only focussed on the number of subsets of some fixed size $t$ that are covered.  However, one could instead consider all covered sets, and look to maximise $\card{\Cov(A)}$ instead.

In the context of set shattering, similar questions have indeed been considered.  The famous Sauer--Shelah inequality states that a set family $\mc F$ must shatter at least $\card{\mc F}$ sets in total.  Given its numerous applications, a pressing open problem is the classification of all families that attain this bound with equality.  For details on this line of research, see, for example,~\cite{BR,MR14,RM}.

For our problem, we instead ask which families of a given size maximise the number of shattered sets.  Note that a family of size $m$ can shatter sets of size at most $\floor{\lg m}$, and, if $\lg m$ is small compared to the size $k$ of the ground set, then almost all such sets will have size precisely $\floor{\lg m}$.  If $m = 2^t$, our construction maximises the number of $t$-sets shattered, and hence one might expect it also maximises the total number of shattered sets.

\begin{ques}
Given $0 \le m \le 2^k$, which set family $\mc F \subseteq 2^{[k]}$ of size $\card{\mc F} = m$ maximises the number of shattered sets?
\end{ques}

\paragraph{Acknowledgements}  The authors would like to express their gratitude towards Noga Alon, Gal Kronenberg and Benny Sudakov for helpful discussions at various stages of this project.  Special thanks go to Ugo Vaccaro for bringing~\cite{SCDV16} to our attention, prompting the extension of our initial results on set shattering to general covering arrays.

\end{document}